\def\m{\mathbb{M}}
\def\op{\operatorname}
\def\mmod{\kern-1pt\operatorname{-mod}}
\def\bg{{\bf G}}
\def\bk{\Bbbk}
\def\la{\lambda}
\def\bc{{\bf C}}
\def\f{\mathbb{F}}
\newtheorem{theorem}{Theorem}[section]
\newtheorem{lemma}[theorem]{Lemma}
\newtheorem{remark}[theorem]{Remark}
\newtheorem{proposition}[theorem]{Proposition}
\theoremstyle{proposition}
\numberwithin{equation}{section}
\begin{document}

\title[Block decomposition of principal representation category]{The block decomposition of the principal representation category of reductive algebraic groups with Frobenius maps}

%    Information for first author
\author{Xiaoyu Chen}
%    Address of record for the research reported here
\address{Department of Mathematics, Shanghai Normal University,
100 Guilin Road, Shanghai 200234, PR China.}
%    Current address
\email{chenxiaoyu@shnu.edu.cn}
%    \thanks will become a 1st page footnote.
%\thanks{}

%    Information for second author
\author{Junbin Dong}
\address{Institute of Mathematical Sciences, ShanghaiTech University, 393 Middle Huaxia Road, Pudong, Shanghai 201210, PR China.}
\email{dongjunbin@shanghaitech.edu.cn}

% General info
\subjclass[2010]{20C07, 20G05}

\date{}

\keywords{Algebraic group,  principal representation, extension}

\begin{abstract}
Let ${\bf G}$ be a connected reductive algebraic group defined over the finite field $\mathbb{F}_q$ with $q$ elements. Let $\Bbbk$ be a field such that  $\op{char} \Bbbk \ne \op{char} \mathbb{F}_q$.  In this paper, we study the extensions of simple modules (over $\Bbbk$) in the  principal representation category $\mathscr{O}(\bf G)$ which is defined in \cite{D1}. In particular, we get the block decomposition of   $\mathscr{O}(\bf G)$, which is parameterized by the central  characters  of ${\bf G}$.
\end{abstract}

\maketitle

\section{Introduction}
Let ${\bf G}$ be a connected reductive algebraic  group defined over the finite field $\mathbb{F}_q$ with $q$ elements.  All the representations we considered in this paper are over a field $\Bbbk$, where $\Bbbk$ satisfies $\op{char} \Bbbk \ne \op{char} \mathbb{F}_q$. Let ${\bf B}$ be a Borel subgroup of {\bf G}. Let $\bf T$ be a maximal  torus contained in the  Borel subgroup $\bf B$ and $\theta$  be   a character of ${\bf T}$. Thus $\theta$  can also be regarded as a character of ${\bf B}$ by letting ${\bf U}$ (the unipotent radical of ${\bf B}$) act trivially. Let $\Bbbk_\theta$ be the one-dimensional representation of ${\bf B}$ affording $\theta$. The naive induced module $\mathbb{M}(\theta)=\Bbbk{\bf G}\otimes_{\Bbbk{\bf B}}{\Bbbk}_\theta$   was deeply studied in \cite{CD1}.  In particular,  $\mathbb{M}(\theta)$  has a composition series of finite length and the composition factors of $\mathbb{M}(\theta)$  are $E(\theta)_J$ with $J\subset I(\theta)$ (see Section 2 for the explicit definition of $I(\theta)$ and $E(\theta)_J$). Not long after, J.B. Dong introduced the principal representation category $\mathscr{O}({\bf G})$  in  \cite{D1}, which  was conjectured to be a highest weight category in the sense of \cite{CPS} when $\Bbbk =\mathbb{C}$.  In particular, the extensions of the simple modules in $\mathscr{O}({\bf G})$ may have good properties.  But soon after,  X.Y. Chen constructed a counter example (see \cite{C3}) to show that this conjecture is not true.  Thus we are interested  in the extensions of the representations in   $\mathscr{O}(\bg)$ and in \cite{CD2} we showed that  for any two characters $\lambda,\mu$ of $\bf T$,  $\op{Ext}_{\bk\bg}^1(\m(\lambda),\m(\mu))=0$ if and only if $\lambda|_{\bf C}\ne\mu|_{\bf C}$, where $\bc$ is the center of $\bg$.  In this paper, we study the extensions between simple modules  in the category $\mathscr{O}(\bg)$.  The main theorem is as follows:

\begin{theorem} \label{mainthm} \normalfont
Let $\lambda,\mu$ be two characters of $\bf T$.  If  $\lambda|_{\bf C}\ne\mu|_{\bf C}$, then $\op{Ext}_{\bk\bg}^1(E(\lambda)_J, E(\mu)_K)=0$. If $\lambda|_{\bf C}= \mu|_{\bf C}$  and $ E(\mu)_K$ is infinite dimensional, then $\op{Ext}_{\bk\bg}^1(E(\lambda)_J, E(\mu)_K)\ne 0$.
\end{theorem}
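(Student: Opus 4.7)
For the first statement, the plan is to use the central action to split any extension directly. Since $\lambda|_{\bc} \neq \mu|_{\bc}$, pick $c_0 \in \bc$ with $\lambda(c_0) \neq \mu(c_0)$. Given a short exact sequence
\[
0 \longrightarrow E(\mu)_K \longrightarrow M \longrightarrow E(\lambda)_J \longrightarrow 0,
\]
the element $c_0$ acts as the scalar $\mu(c_0)$ on $E(\mu)_K$ and as $\lambda(c_0)$ on $E(\lambda)_J$. A direct check shows that the polynomial $(X - \lambda(c_0))(X - \mu(c_0))$ annihilates the operator $c_0$ on $M$: indeed, $c_0 - \mu(c_0)$ vanishes on $E(\mu)_K$, while $c_0 - \lambda(c_0)$ sends any lift of an element of $E(\lambda)_J$ into $E(\mu)_K$. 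Since the two roots are distinct, $c_0$ acts diagonalizably, and $M$ decomposes as the direct sum of its $\lambda(c_0)$- and $\mu(c_0)$-eigenspaces. Because $c_0$ is central in $\bg$, both eigenspaces are $\bg$-submodules; the $\mu(c_0)$-eigenspace coincides with $E(\mu)_K$, and the $\lambda(c_0)$-eigenspace projects isomorphically onto $E(\lambda)_J$, yielding the desired splitting.

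\textbf{Non-vanishing case.} For the second statement, the plan is to construct an explicit non-split extension of $E(\lambda)_J$ by $E(\mu)_K$. The natural starting point is the non-vanishing result of \cite{CD2}: since $\lambda|_{\bc} = \mu|_{\bc}$, there exists a non-split short exact sequence
\[
0 \longrightarrow \m(\mu) \longrightarrow X \longrightarrow \m(\lambda) \longrightarrow 0.
\]
I would then transport this class to the simple subquotients $E(\mu)_K$ and $E(\lambda)_J$ by a combination of pushouts along the finite composition series of $\m(\mu)$ (which has the $E(\mu)_T$ as factors) and pullbacks along subquotients of $\m(\lambda)$ whose top contains $E(\lambda)_J$. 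These operations are encoded by the connecting maps in the Ext long exact sequences attached to $0 \to K_\mu \to \m(\mu) \to E(\mu)_K \to 0$ (applying $\op{Ext}^*_{\bk\bg}(\m(\lambda),-)$) and $0 \to K_\lambda \to \m(\lambda) \to E(\lambda)_J \to 0$ (applying $\op{Ext}^*_{\bk\bg}(-,E(\mu)_K)$).

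The main obstacle is to verify that the resulting extension remains non-split, and this is precisely where the hypothesis that $E(\mu)_K$ is infinite-dimensional is expected to enter. Concretely, the plan is to compute the relevant $\op{Hom}_{\bk\bg}$-spaces between composition factors of $\m(\lambda)$ and $\m(\mu)$, and to combine them with the long exact sequences above to isolate a non-zero class in $\op{Ext}^1_{\bk\bg}(E(\lambda)_J, E(\mu)_K)$; the infinite-dimensionality should force the obstructions (coming from extraneous $\op{Hom}$ contributions) to vanish, allowing the non-trivial class from $\op{Ext}^1_{\bk\bg}(\m(\lambda),\m(\mu))$ to survive to the subquotient level. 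Should this reduction prove too delicate, a backup plan is to construct a non-trivial $1$-cocycle $\bg \to \op{Hom}_{\bk}(E(\lambda)_J, E(\mu)_K)$ by hand and verify that it is not a coboundary, again using the infinite-dimensionality of $E(\mu)_K$ to guarantee enough room in the target.
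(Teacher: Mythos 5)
Your argument for the vanishing case is correct and is essentially the paper's argument: both use a central element $c_0$ with $\lambda(c_0)\neq\mu(c_0)$ to diagonalize the action of $c_0$ on $M$ and split off the $\lambda(c_0)$-eigenspace as a $\bk\bg$-submodule mapping isomorphically onto $E(\lambda)_J$. Your formulation via the polynomial $(X-\lambda(c_0))(X-\mu(c_0))$ is, if anything, slightly cleaner than the paper's construction of a single corrected eigenvector.

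The non-vanishing case is where essentially all of the work lies, and your proposal does not close it. Your primary plan --- transporting the non-zero class in $\op{Ext}^1_{\bk\bg}(\m(\lambda),\m(\mu))$ from \cite{CD2} to the subquotients via long exact sequences --- faces exactly the obstruction you acknowledge but do not resolve: there is no reason the class survives the connecting maps, and a single non-zero class on the induced modules cannot by itself certify non-vanishing for \emph{every} pair $(J,K)$ with $E(\mu)_K$ infinite-dimensional, which is what the theorem asserts. Controlling the extraneous $\op{Hom}$ and $\op{Ext}^1$ terms between all pairs of composition factors is essentially the content of the theorem itself, so the reduction is circular. The paper instead carries out (a sophisticated version of) your backup plan: it builds the extension $M$ directly as a direct limit of the $\bk G_i$-modules $M_i=E_i(\lambda)_J\oplus E_i(\mu)_K$, where the transition map $f_i$ sends $C(\lambda)_J$ to $C(\lambda)_J+\xi_i$ with $\xi_i=\bigl(\sum_{w\in W_J}(-1)^{\ell(w)}w\bigr)\sum_{p\in P_{J',i}}\lambda(p)^{-1}p\,u_i\,w_0C(\mu)_K\in E_{i+1}(\mu)_K$. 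The heart of the proof is a sequence of counting estimates in the finite groups $G_i$ (the lemmas on the sets $\Omega_w$, $\Omega$ and $\Gamma$) showing that for large $i$ one can choose $u_i\in U_{i+1}$ making $\xi_i\neq 0$; the infinite-dimensionality of $E(\mu)_K$ enters because $\xi_i$ must lie in $E_{i+1}(\mu)_K\setminus E_i(\mu)_K$, i.e.\ the module must keep growing along the limit. Non-splitness is then proved by showing $M$ has no ${\bf U}'_{w_J}$-fixed vector outside $E(\mu)_K$. None of this is present, even in outline, in your proposal, so the second half of the theorem remains unproved.
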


 This paper is organized as follows:  In Section 2, we give some notations and preliminary results. In Section 3, we prove the main  theorem and thus give the block decompositions of the category $\mathscr{O}(\bg)$,  which is parameterized by the central  characters  of ${\bf G}$.

\section{Preliminaries}
As in the introduction,  ${\bf G}$ is a connected reductive algebraic group defined over $\mathbb{F}_q$ with the standard Frobenius homomorphism $\text{Fr}$ induced by the automorphism $x\mapsto x^q$ on $\bar{\mathbb{F}}_q$, where $q$ is a power of a prime number $p$.  Denote by ${\bf C}$ the center of ${\bf G}$.  Let ${\bf B}$ be an $\text{Fr}$-stable Borel subgroup, and ${\bf T}$ be an $\text{Fr}$-stable maximal torus contained in ${\bf B}$, and ${\bf U}=R_u({\bf B})$ be the ($\text{Fr}$-stable) unipotent radical of ${\bf B}$. We identify ${\bf G}$ with ${\bf G}(\bar{\mathbb{F}}_q)$ and do likewise for the various subgroups of ${\bf G}$ such as ${\bf B}, {\bf T}, {\bf U}$ $\cdots$. We denote by $\Phi=\Phi({\bf G};{\bf T})$ the corresponding root system, and by $\Phi^+$ (resp. $\Phi^-$) the set of positive (resp. negative) roots determined by ${\bf B}$. Let $W=N_{\bf G}({\bf T})/{\bf T}$ be the corresponding Weyl group and denote by $\ell(w)$ the length of $w\in W$. We denote by $\Delta=\{\alpha_i\mid i\in I\}$ the set of simple roots, and by $S=\{s_i\mid i\in I\}$ the corresponding simple reflections. For each $\alpha\in\Phi$, let ${\bf U}_\alpha$ be the root subgroup corresponding to $\alpha$ and we fix an isomorphism $\varepsilon_\alpha: \bar{\mathbb{F}}_q\rightarrow{\bf U}_\alpha$ such that $t\varepsilon_\alpha(c)t^{-1}=\varepsilon_\alpha(\alpha(t)c)$ for any $t\in{\bf T}$ and $c\in\bar{\mathbb{F}}_q$. For any $w\in W$, let $\Phi_w^+=\{\alpha\in\Phi^+\mid w(\alpha)\in\Phi^+\}$ and $\Phi_w^-=\{\alpha\in\Phi^+\mid w(\alpha)\in\Phi^-\}$.
Let ${\bf U}_w$ (resp.  ${\bf U}'_w$ ) be the subgroup of ${\bf U}$ generated by all ${\bf U}_\alpha$  with $\alpha\in\Phi_w^-$  (resp.  $\alpha\in\Phi_w^+$ ).  One can refer to \cite{Car} for  the structure theory of algebraic groups.

For each positive integer $k$, we denote $G_k$ for the $\f_{q^{k!}}$-points of $\bg$, and do likewise for $B_k,T_k,U_k,U_{w,k}$ etc.  Now let  ${\bf H}$ be a group which has a sequence of finite subgroups $H_1, H_2, \dots, H_k, \dots $ such that  ${\bf H}=\displaystyle  \bigcup_{i=1}^{\infty} H_i$ and $H_i \subset H_j$ whenever $i<j$. Clearly, the groups ${\bf G}, {\bf B}, {\bf T}, {\bf U}$ satisfy this property. Let $\{M_i, \varphi_{ij}\}$ be a direct system of vector spaces over $\Bbbk$.  Assume that  $M_i$ is a $\Bbbk H_i$-module for each integer $i$ and the morphisms $\varphi_{ij}:$ $M_i\rightarrow M_j$ satisfy $\varphi_{ij}(hm)=h\varphi_{ij}(m)$ for any $h\in H_i$ and $m\in M_i$ whenever $i<j$.  Then the direct limit $M:=\varinjlim M_i$ is a $\Bbbk {\bf H}$-module by \cite[Lemma 1.5]{Xi}. Namely, let $h\in{\bf H}$ and $m\in M$, then $h\in H_i$ for some $i$ and $m_j\in M_j$ represents $m$ for some $j$, choose an integer  $k>i,j$ and define $hm$ as the element of $M$ represented by $h\varphi_{jk}(m_j)\in M_k$.

This is a useful method to construct the abstract representations of algebraic groups. Moreover, N.H. Xi showed that if each $M_i$ is an irreducible $\Bbbk H_i$-module, then the direct limit $\varinjlim M_i$ is an irreducible  $\Bbbk {\bf H}$-module.

 Let $\widehat{\bf T}$ be the set of characters of ${\bf T}$ over ${\Bbbk}$. Each $\theta\in\widehat{\bf T}$ is regarded as a character of ${\bf B}$ by the homomorphism ${\bf B}\rightarrow{\bf T}$. Let ${\Bbbk}_\theta$ be the corresponding ${\bf B}$-module. Let $\mathbb{M}(\theta)=\Bbbk{\bf G}\otimes_{\Bbbk{\bf B}}{\Bbbk}_\theta$ and we write $x{\bf 1}_{\theta}:=x\otimes{\bf 1}_{\theta}\in \mathbb{M}(\theta)$ for short, where ${\bf 1}_{\theta}$  is a nonzero element in ${\Bbbk}_\theta$. Using the Bruhat decomposition, we see that $$\mathbb{M}(\theta)= \sum_{w\in W}\Bbbk {\bf U}_{w^{-1}} \dot{w} {\bf 1}_{\theta}.$$

For each $i \in I$, let ${\bf G}_i$ be the subgroup of $\bf G$ generated by ${\bf U}_{\alpha_i}, {\bf U}_{-\alpha_i}$ and we set ${\bf T}_i= {\bf T}\cap {\bf G}_i$. For $\theta\in\widehat{\bf T}$, we let $$I(\theta)=\{i\in I \mid \theta| _{{\bf T}_i} \ \text {is trivial}\}.$$
For $J\subset I(\theta)$, let $W_J$ be the subgroup of $W$ generated by $s_i~(i\in J)$ and $w_J$ be the longest element of $W_J$.  Let ${\bf G}_J$ be the subgroup of $\bf G$ generated by ${\bf G}_i~(i\in J)$. We choose a representative $\dot{w}\in {\bf G}_J$ for each $w\in W_J$. Thus, the element $w{\bf 1}_\theta:=\dot{w}{\bf 1}_\theta$  $(w\in W_J)$ is well defined. For $J\subset I(\theta)$, we set
$$\eta(\theta)_J=\sum_{w\in W_J}(-1)^{\ell(w)}w{\bf 1}_{\theta},$$
and  let $\mathbb{M}(\theta)_J=\Bbbk{\bf G}\eta(\theta)_J$ be the $\Bbbk {\bf G}$-module which is generated by $\eta(\theta)_J$.

The structure of  $\mathbb{M}(\theta)_J$ is very useful to our later discussion.  We state the results here. One can refer to \cite[Section 2]{CD1} for more details. Denote by $$W^J =\{x\in W\mid x~\op{has~minimal~length~in}~xW_J\}$$ and we have
$$\mathbb{M}(\theta)_J=\sum_{w\in W^J}\Bbbk {\bf U}_{w_Jw^{-1}}\dot{w}\eta(\theta)_J.$$
Now we give the formula of $\dot{s_i}\varepsilon_i(x)\dot{w} \eta(\theta)_J$ for $w\in W^J$, $x\in\bar{\mathbb{F}}_q^*$, and $\alpha_i\in\Phi_{w_Jw^{-1}}^-$. The details are referred to \cite[Proposition 2.5]{CD1}.
For each $x\in\bar{\mathbb{F}}_q^*$, we have
$$\dot{s_i}\varepsilon_i(x)\dot{s_i}^{-1}=f_i(x)\dot{s_i}h_i(x)g_i(x),$$
where $f_i(x),g_i(x) \in {\bf U}_{\alpha_i}\backslash\{e\}$, and $h_i(x)\in {\bf T}_i$ are uniquely determined. Let $\le$ be the Bruhat order in $W$.
Noting that  $w_Jw^{-1}(\alpha_i)\in\Phi^-$,  we get $s_i w w_J \leq ww_J$. By easy calculation, we have the following

\begin{proposition} \label{suwformula}
With the notations above,

\noindent (i) If $s_iw \leq w$, then
$$\dot{s_i} \varepsilon_i(x) \dot{w} \eta(\theta)_J=\theta^w(\dot{s_i}h_i(x)\dot{s_i})f_i(x)\dot{w}\eta(\theta)_J.$$

\noindent  (ii) If $w \leq  s_iw$ but $s_iww_J\leq ww_J$, then
$$\dot{s_i} \varepsilon_i(x)  \dot{w} \eta(\theta)_J =(f_i(x)-1)\dot{w}\eta(\theta)_J.$$
\end{proposition}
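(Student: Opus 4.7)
The plan is to reduce both cases to a computation inside the rank-one subgroup ${\bf G}_i$. Rearranging the given identity $\dot{s}_i\varepsilon_i(x)\dot{s}_i^{-1}=f_i(x)\dot{s}_ih_i(x)g_i(x)$ (multiplying by $\dot{s}_i$ on the right and absorbing $\dot{s}_i^2\in{\bf T}_i$) yields
\[
\dot{s}_i\varepsilon_i(x)\;=\;f_i(x)\,t(x)\,g''(x),\qquad t(x)=\dot{s}_ih_i(x)\dot{s}_i\in{\bf T}_i,\ \ g''(x)\in{\bf U}_{-\alpha_i}.
\]
Applied to $\dot{w}\eta(\theta)_J$, the torus piece produces the scalar $\theta^w(\dot{s}_ih_i(x)\dot{s}_i)$ uniformly: commuting $t(x)$ past $\dot{w}$, the resulting torus element acts on each summand $\dot{y}{\bf 1}_\theta$ ($y\in W_J$) of $\eta(\theta)_J$ by $\theta^{wy}(t(x))$, and the $W_J$-invariance of $\theta$ (a consequence of $J\subset I(\theta)$) collapses this to $\theta^w(t(x))$ independently of $y$, so the scalar factors out of the sum.

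The two cases differ in how one handles $g''(x)\dot{w}\eta(\theta)_J$. In case~(i), the conjugate $\dot{w}^{-1}g''(x)\dot{w}$ lies in ${\bf U}_{-w^{-1}(\alpha_i)}$ with $-w^{-1}(\alpha_i)\in\Phi^+$, and the hypothesis $w\in W^J$ shows in fact $-w^{-1}(\alpha_i)\in\Phi^+\setminus\Phi_J^+$; because $W_J$ preserves $\Phi^+\setminus\Phi_J^+$, every further conjugate $(wy)^{-1}g''(x)(wy)$ for $y\in W_J$ still lies in ${\bf U}^+$ and hence fixes ${\bf 1}_\theta$, so $g''(x)\dot{w}\eta(\theta)_J=\dot{w}\eta(\theta)_J$, yielding case~(i). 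In case~(ii), the three hypotheses together with $w\in W^J$ and the simplicity of $\alpha_i$ force $w^{-1}(\alpha_i)=\alpha_j$ for a unique $j\in J$ (if $w^{-1}(\alpha_i)$ were a non-simple root of $\Phi_J^+$, applying $w$ would express the simple root $\alpha_i$ as a sum of two positive roots, contradiction), hence $ws_j=s_iw$; conjugating by $\dot{w}$ (and choosing representatives so that the residual toral factor lies in ${\bf T}_J$ and is annihilated by $\theta$) rewrites $\dot{s}_i\varepsilon_i(x)\dot{w}$ as $\dot{w}\dot{s}_j\varepsilon_j(x')$ for an explicit scalar $x'$, reducing the problem to the base identity $\dot{s}_j\varepsilon_j(y)\eta(\theta)_J=(f_j(y)-1)\eta(\theta)_J$ for $j\in J$. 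The base identity follows from the antisymmetry $\dot{s}_j\eta(\theta)_J=-\eta(\theta)_J$, which one verifies by writing $\eta(\theta)_J=(1-\dot{s}_j)\mu$ for $\mu$ a signed sum over right coset representatives of $\langle s_j\rangle$ in $W_J$, and noting that $\dot{s}_j^2\in{\bf T}_j$ acts trivially on the ${\bf T}$-weight vector $\mu$ via $\theta|_{{\bf T}_j}=1$; then $\dot{s}_j\varepsilon_j(y)\eta(\theta)_J=-\varepsilon_{-\alpha_j}(-y)\eta(\theta)_J$, and a direct Bruhat computation inside ${\bf G}_j$ (using $\theta|_{{\bf T}_j}=1$ to kill the torus factor) gives $\varepsilon_{-\alpha_j}(-y)\eta(\theta)_J=(1-f_j(y))\eta(\theta)_J$.

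The main technical difficulty is keeping track of toral corrections: the representatives $\dot{s}_i$, $\dot{s}_j$, $\dot{w}$ differ pairwise by elements of ${\bf T}$, and the computation requires that each such correction either lies in a torus subgroup killed by $\theta$ (using $\theta|_{{\bf T}_J}=1$) or is absorbed into the sign constants appearing in $\dot{w}^{-1}\varepsilon_i(x)\dot{w}$ and hence into the final scalar $x'$ in case~(ii). Once a compatible normalization is fixed, both cases follow as sketched.
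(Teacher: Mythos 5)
The paper offers no proof of this proposition --- it is imported from \cite[Proposition 2.5]{CD1} with the comment ``by easy calculation'' --- so there is nothing in-paper to compare your argument against; I can only judge it on its own terms. Your case (i) is complete and correct: the rearrangement $\dot{s_i}\varepsilon_i(x)=f_i(x)\,t(x)\,g''(x)$ with $t(x)=\dot{s_i}h_i(x)\dot{s_i}\in{\bf T}_i$ and $g''(x)\in{\bf U}_{-\alpha_i}$, the observation that $w\in W^J$ forces $-w^{-1}(\alpha_i)\in\Phi^+\setminus\Phi_J^+$, and the $W_J$-invariance of $\theta$ (valid because $\dot{s_j}t\dot{s_j}^{-1}t^{-1}\in{\bf T}_j$ for $j\in J\subset I(\theta)$) yield exactly the stated scalar. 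In case (ii), the identification $w^{-1}(\alpha_i)=\alpha_j$ with $j\in J$ and the base identity $\dot{s_j}\varepsilon_j(y)\eta(\theta)_J=(f_j(y)-1)\eta(\theta)_J$ are both correct as you derive them.

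The one point you should not defer to ``a compatible normalization'' is the transport step in case (ii): after writing $\dot{s_i}\varepsilon_i(x)\dot{w}=\dot{w}\dot{s_j}\varepsilon_j(x')t_3$ with $t_3\in{\bf T}$, the claimed formula (which has no residual scalar) requires both $\theta(t_3)=1$ and the exact identity $\dot{w}f_j(x')\dot{w}^{-1}=f_i(x)$. Neither is a matter of normalization; both hold for structural reasons that you should state. For the first, provided $\dot{s_i}$ is chosen in ${\bf G}_i$ (and $\dot{s_j}\in{\bf G}_J$, as the paper arranges), one has $\dot{w}^{-1}{\bf G}_i\dot{w}={\bf G}_j$, so $t_3=\dot{s_j}^{-1}(\dot{w}^{-1}\dot{s_i}\dot{w})\in{\bf T}\cap{\bf G}_J$, which is killed by $\theta$; the same mechanism ($\dot{w}^{-1}{\bf T}_i\dot{w}={\bf T}_j$) kills the factor $\theta^w(\dot{s_i}h_i(x)\dot{s_i})$ that would otherwise survive from case (i). For the second, one checks that $\dot{w}\bigl(\dot{s_j}\varepsilon_j(x')\dot{s_j}^{-1}\bigr)\dot{w}^{-1}=\dot{s_i}\varepsilon_i(x)\dot{s_i}^{-1}$ (the toral factors cancel because $\alpha_j(w^{-1}(t))=\alpha_i(t)$), and then conjugating the Bruhat decomposition $f_j(x')\dot{s_j}h_j(x')g_j(x')$ by $\dot{w}$ produces a Bruhat decomposition of $\dot{s_i}\varepsilon_i(x)\dot{s_i}^{-1}$ whose ${\bf U}_{\alpha_i}$-part is $\dot{w}f_j(x')\dot{w}^{-1}$; uniqueness of the Bruhat decomposition then gives $\dot{w}f_j(x')\dot{w}^{-1}=f_i(x)$. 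With these two observations supplied, your argument is a complete proof.
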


We define
$$E(\theta)_J=\mathbb{M}(\theta)_J/N(\theta)_J,$$
where $N(\theta)_J$ is the sum of all $\mathbb{M}(\theta)_K$ with $J\subsetneq K\subset I(\theta)$. We denote by $C(\theta)_J$ the image of $\eta(\theta)_J$ in $E(\theta)_J$.
Set
$$ Z_J(\theta)  =\{w\in X_J \mid \mathscr{R}(ww_J)\subset J\cup (I\backslash I(\theta))\},$$
where $\mathscr{R}(w)= \{s\in S \mid ws <w\}$.
\begin{proposition} \cite[Proposition 2.7]{CD1} \label{DesEJ}
For $J\subset I(\theta)$, we have
$$E(\theta)_J=\sum_{w\in Z_J(\theta)}\Bbbk {\bf U}_{w_Jw^{-1}}\dot{w}C(\theta)_J,$$
and  the set $\{u\dot{w}C(\theta)_J \mid w\in Z_J(\theta), u\in {\bf U}_{w_Jw^{-1}} \}$ is a basis of $E(\theta)_J$.
\end{proposition}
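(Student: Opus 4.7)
The statement has two parts: a spanning description of $E(\theta)_J$ indexed by $Z_J(\theta)$, and linear independence of the displayed set. My plan is first to cut down the spanning set of $\m(\theta)_J$ recalled above (indexed by $W^J$) to one indexed by $Z_J(\theta)$ by showing the excluded terms lie in $N(\theta)_J$; and then to verify linear independence by tracking leading Bruhat components in the ambient module $\m(\theta)$.

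For the spanning step, take $w\in W^J\setminus Z_J(\theta)$; the definition of $Z_J(\theta)$ produces some $i\in I(\theta)\setminus J$ with $s_iww_J<ww_J$. Setting $K=J\cup\{i\}$, the parabolic factorisation $W_K=\bigsqcup_{x\in W_K^J}xW_J$ (with $W_K^J$ the minimal representatives of $W_K/W_J$, satisfying $\ell(xy)=\ell(x)+\ell(y)$) yields the identity $\eta(\theta)_K=\sum_{x\in W_K^J}(-1)^{\ell(x)}\dot{x}\eta(\theta)_J$, realising $\m(\theta)_K\subset N(\theta)_J$ as an explicit submodule of $\m(\theta)_J$. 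I would then use iterated applications of Proposition \ref{suwformula} to rewrite each $\bu_{w_Jw^{-1}}\dot{w}\eta(\theta)_J$ as a $\bk$-combination of a translate of $\eta(\theta)_K$ and terms $\bu_{w_J(w')^{-1}}\dot{w}'\eta(\theta)_J$ with $\ell(w'w_J)<\ell(ww_J)$. Descending induction on $\ell(ww_J)$ then eliminates every such $w\notin Z_J(\theta)$ modulo $N(\theta)_J$, producing the claimed spanning set.

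For linear independence, I would exploit the direct-sum Bruhat decomposition $\m(\theta)=\bigoplus_{v\in W}\bk\bu_{v^{-1}}\dot{v}\one_\theta$ and project onto the top Bruhat cell of each generator. For $w\in Z_J(\theta)$ and $u\in\bu_{w_Jw^{-1}}$, expanding $u\dot{w}\eta(\theta)_J=\sum_{y\in W_J}(-1)^{\ell(y)}u\dot{wy}\one_\theta$ shows that the $y=w_J$ summand already lies in the canonical cell $\bu_{(ww_J)^{-1}}\dot{ww_J}\one_\theta$ (using $\bu_{w_Jw^{-1}}=\bu_{(ww_J)^{-1}}$), while every other summand, after straightening $u\dot{wy}$ via the Chevalley commutator relations, contributes only to cells indexed by $v<ww_J$. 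A putative relation $\sum c_{w,u}u\dot{w}C(\theta)_J=0$ in $E(\theta)_J$ therefore, after lifting to $\m(\theta)_J$, must have vanishing projection onto each top cell $ww_J$; distinct $u$'s in $\bu_{w_Jw^{-1}}$ give distinct Bruhat basis vectors, so all $c_{w,u}$ vanish. It remains to check that $N(\theta)_J$ cannot contribute to any such top cell; inductively, the top-cell positions of $\m(\theta)_K$ for $K\supsetneq J$ are indexed by $yw_K$ with $y\in Z_K(\theta)$, and the defining condition $\mathscr{R}(ww_J)\subset J\cup(I\setminus I(\theta))$ for $Z_J(\theta)$ forbids any equality $yw_K=ww_J$.

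The main obstacle will be the Bruhat-theoretic bookkeeping in the spanning step: one must verify that, after commuting a generic element of $\bu_{w_Jw^{-1}}$ past the relevant $\dot{s}_i$ to splice in a translate of $\eta(\theta)_K$, every remainder lands either inside $\m(\theta)_K$ or is indexed by some $w'\in W^J$ with $\ell(w'w_J)<\ell(ww_J)$, so that the induction is well-founded. Proposition \ref{suwformula}, together with the Chevalley commutator relations governing how root subgroups factor past $\dot{s}_i$, is the decisive computational ingredient; once this is under control, the leading-term argument for independence is a routine check.
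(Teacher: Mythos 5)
First, note that the paper does not prove this proposition at all: it is quoted verbatim from \cite[Proposition 2.7]{CD1}, so any self-contained argument is necessarily "different from the paper"; the real question is whether your argument closes.

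There is a genuine gap in your linear-independence step. A relation $\sum c_{w,u}u\dot{w}C(\theta)_J=0$ lifts to an element $X=\sum c_{w,u}u\dot{w}\eta(\theta)_J$ of $N(\theta)_J$, and you conclude that the projection of $X$ onto each cell $ww_J$ ($w\in Z_J(\theta)$) vanishes because "$N(\theta)_J$ cannot contribute to any such top cell", justified only by the observation that the \emph{leading} cells of the standard generators of $\m(\theta)_K$ sit at $yw_K$ with $yw_K\neq ww_J$. That combinatorial inequality is correct (for $y\in W^K$ one has $K\subset\mathscr{R}(yw_K)$, while $\mathscr{R}(ww_J)$ misses $I(\theta)\setminus J$), but it only controls leading terms: the \emph{lower-order} terms of elements of $\m(\theta)_K$ do land in cells of the form $ww_J$ with $w\in Z_J(\theta)$. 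Concretely, take $\theta$ trivial, $J=\emptyset$, $K=\{i\}$: then $Z_J(\theta)=\{e\}$, and $\eta(\theta)_K=\one_\theta-\dot{s}_i\one_\theta\in N(\theta)_J$ has a nonzero component in the cell $e=ew_J$. So "vanishing projection onto each top cell" is simply false for general elements of $N(\theta)_J$, and your inference collapses. A maximality argument cannot be run naively either, because $N(\theta)_J=\sum_{K\supsetneq J}\m(\theta)_K$ is not a direct sum and leading terms of different $\m(\theta)_K$'s can cancel against each other, leaving only lower-order contributions; ruling out that such residue lies in the $Z_J(\theta)$-span is essentially the whole content of the cited result (in effect one must identify $N(\theta)_J$ with the span of the basis vectors $u\dot{w}\eta(\theta)_J$, $w\in W^J\setminus Z_J(\theta)$, or set up a filtration compatible with all $K$ simultaneously).

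The spanning half is plausible in outline (the identity $\eta(\theta)_K=\sum_{x\in W_K^J}(-1)^{\ell(x)}\dot{x}\eta(\theta)_J$ is correct, and the $SL_3$ sanity check $u\dot{s}_i\one_\theta\equiv\one_\theta$ mod $\m(\theta)_{\{i\}}$ works), but the decisive straightening step — that after splicing in a translate of $\eta(\theta)_K$ every remainder is either in $\m(\theta)_K$ or indexed by $w'$ with $\ell(w'w_J)<\ell(ww_J)$ — is exactly what you defer, and it is not routine: $u\in\bu_{w_Jw^{-1}}$ need not normalize correctly past $\dot{s}_i$, so iterating Proposition \ref{suwformula} produces terms whose re-expression in the standard basis has to be controlled. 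As it stands, both halves need the detailed bookkeeping carried out in \cite{CD1}; your proposal is a reasonable plan but not yet a proof, and the independence argument as written is incorrect.
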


By  \cite[Theorem 3.1]{CD1},  the composition factors of  $\mathbb{M}(\theta)$  are $E(\theta)_J$ $ (J\subset I(\theta))$,  with each of multiplicity one. According to \cite[Proposition 2.8]{CD1}, one has that  $E(\theta_1)_{K_1}$ is isomorphic to $E(\theta_2)_{K_2}$ as $\Bbbk {\bf G}$-modules if and only if $\theta_1=\theta_2$ and $K_1=K_2$.

The irreducible $\Bbbk {\bf G}$-modules $E(\theta)_J$ can also be realized by parabolic induction.
For any $J\subset I$, let ${\bf P}_J$ be the corresponding standard parabolic subgroup of  ${\bf G}$.
We  have the Levi decomposition ${\bf P}_J={\bf L}_J\ltimes{\bf U}_J$, where ${\bf L}_J$ is the subgroup of ${\bf P}_J$ generated by ${\bf T}$, and all ${\bf U}_{\alpha_i}$ and ${\bf U}_{-\alpha_i}$ with $i\in J$ and ${\bf U}_J=R_u({\bf P}_J)$. Let $\theta\in\widehat{\bf T}$ and $K\subset I(\theta)$. Since $\theta|_{{\bf T}_i}$ is trivial for all $i\in K$, it induces a character (still denoted by $\theta$) of $\overline{\bf T}={\bf T}/{\bf T}\cap[{\bf L}_K,{\bf L}_K]$. Therefore, $\theta$ is regarded as a character of ${\bf L}_K$ by the homomorphism ${\bf L}_K\rightarrow\overline{\bf T}$, and hence as a character of ${\bf P}_K$ by letting ${\bf U}_K$ acts trivially. Let $\Bbbk_\theta$ be the one-dimensional representation of ${\bf P}_K$ affording $\theta$. We set $\mathbb{M}(\theta, K):=\Bbbk{\bf G}\otimes_{\Bbbk{\bf P}_K}\theta$.  Let ${\bf 1}_{\theta, K}$ be a nonzero element in the one-dimensional module $\Bbbk_\theta$ associated to $\theta$. We abbreviate $x{\bf 1}_{\theta, K}:=x\otimes{\bf 1}_{\theta, K} \in \mathbb{M}(\theta, K)$ as before.
For $J\subset I(\theta)$, set $J'=I(\theta)\backslash J$ and  we denote by $\nabla(\theta)_J= \mathbb{M}(\theta, J')=\Bbbk{\bf G}\otimes_{\Bbbk{\bf P}_{J'}}\Bbbk_\theta$.  Let $E(\theta)_J'$ be the submodule of $\nabla(\theta)_J$ generated by $$D(\theta)_J:=\sum_{w\in W_J}(-1)^{\ell(w)}\dot{w}{\bf 1}_{\theta, J'}.$$
We see that $E(\theta)_J'$ is isomorphic to $E(\theta)_J$ as $\Bbbk {\bf G}$-modules by \cite[Proposition 1.9]{CD1}.  In the following, we will not make a distinction between $E(\theta)_J$ and $E(\theta)_J'$,  and regard $E(\theta)_J$ as the socle of $\nabla(\theta)_J$.

\section{The extensions of the simple modules in $\mathscr{O}({\bf G})$}

Let $\lambda, \mu$ be two characters of ${\bf T}$.  In this section, we study the extensions $\op{Ext}_{\bk\bg}^1(E(\lambda)_J, E(\mu)_K)$, where $J\subset I(\lambda)$ and $K\subset I(\mu)$.

\begin{theorem} \label{ifpart}
If $\lambda|_{\bf C}\ne\mu|_{\bf C}$, then $\op{Ext}_{\bk\bg}^1(E(\lambda)_J, E(\mu)_K)=0$.
\end{theorem}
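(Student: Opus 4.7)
The plan is to argue via central characters, which is the classical mechanism for separating blocks in representation categories of this kind.

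First I would establish that for any $\theta\in\widehat{\bf T}$ and any $K\subset I(\theta)$, the central subgroup ${\bf C}$ acts on $E(\theta)_K$ by the scalar $\theta|_{\bf C}$. This is immediate from the presentation recalled in Section 2: the cyclic generator $C(\theta)_K$ is the image of $\eta(\theta)_K=\sum_{w\in W_K}(-1)^{\ell(w)}\dot w{\bf 1}_\theta$, and any $c\in{\bf C}$ commutes with each $\dot w$ (since ${\bf C}$ is central in ${\bf G}$) and acts on ${\bf 1}_\theta$ by $\theta(c)$; hence $c\cdot C(\theta)_K=\theta(c)C(\theta)_K$. Because $E(\theta)_K=\bk{\bf G}\cdot C(\theta)_K$ and $c$ commutes with $\bk{\bf G}$, the same scalar propagates to all of $E(\theta)_K$.

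Next, given an extension $0\to E(\mu)_K\to M\to E(\lambda)_J\to 0$, I would choose $c\in{\bf C}$ with $\mu(c)\ne\lambda(c)$, which exists by the hypothesis $\lambda|_{\bf C}\ne\mu|_{\bf C}$. For any $m\in M$, its image $\bar m$ in $E(\lambda)_J$ satisfies $(c-\lambda(c))\bar m=0$, so $(c-\lambda(c))m\in E(\mu)_K$; applying $(c-\mu(c))$ then kills it. Thus the polynomial $(T-\mu(c))(T-\lambda(c))\in\bk[T]$, which has distinct roots, annihilates the operator $c$ on $M$.

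The standard eigenspace decomposition yields $M=\ker(c-\mu(c))\oplus\ker(c-\lambda(c))$, and both summands are $\bk{\bf G}$-submodules because $c$ is central. Comparing central characters forces $\ker(c-\mu(c))=E(\mu)_K$ and $\ker(c-\lambda(c))$ to project isomorphically onto $E(\lambda)_J$, which splits the sequence. No step poses a serious obstacle: the only ingredient specific to the present setup --- that ${\bf C}$ acts by the expected scalar on each simple object of $\mathscr{O}({\bf G})$ --- is read off directly from the explicit description of $E(\theta)_K$, and from there the argument is the usual central-character block separation.
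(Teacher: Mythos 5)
Your proposal is correct and follows essentially the same route as the paper: both proofs pick a central element $c_0\in{\bf C}$ with $\lambda(c_0)\ne\mu(c_0)$ and split the extension using the resulting eigenspace structure, the paper by explicitly correcting a lift of $C(\lambda)_J$ to a $\lambda(c_0)$-eigenvector and invoking simplicity, you by noting that $(T-\mu(c_0))(T-\lambda(c_0))$ annihilates $M$ and has distinct roots. Your variant is marginally cleaner in that it never uses simplicity of the two modules, but the underlying idea is identical.
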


\begin{proof} Let
$$0\rightarrow E(\mu)_K \rightarrow M \rightarrow E(\lambda)_J \rightarrow0$$
be a short exact sequence of $\Bbbk{\bf G}$-modules. Noting that  $$E(\lambda)_J=\sum_{w\in Z_J(\lambda)}\Bbbk {\bf U}_{w_Jw^{-1}}\dot{w}C(\lambda)_J,$$
let $\xi(\lambda)_J\in M$  such that its image is  $C(\lambda)_J$.  Since $\lambda|_{\bf C}\ne\mu|_{\bf C}$,  there is a $c_0\in\mathbb{\bf C}$ such that $\lambda(c_0)\ne\mu(c_0)$. We have
$$c_0 \xi(\lambda)_J =\lambda(c_0) \xi(\lambda)_J + m_0$$ for some $m_0 \in E(\mu)_K$. It is easy to see that $c_0 m=\mu(c_0) m$ for any $m\in E(\mu)_K$. Let $v_0=\xi(\lambda)_J  +(\la(c_0)-\mu(c_0))^{-1}m_0\in M $. Then it is easy to see that
$$ c_0v_0=\lambda(c_0) \xi(\lambda)_J + m_0  +\mu(c_0)  (\la(c_0)-\mu(c_0))^{-1}m_0 =\lambda(c_0)v_0.$$
Now we set $M_{\lambda(c_0)} =\{v\in M \mid c_0v= \lambda(c_0)v \}$, which is nonzero since $v_0\in M_{\lambda(c_0)}$.
We also have $M_{\lambda(c_0)} \cap  E(\mu)_K =0$. It is clear that $M_{\lambda(c_0)}$ is a $\Bbbk {\bf G}$-module since $c_0$ commutes with each element in ${\bf G}$. Noting that  $E(\lambda)_J, E(\mu)_K$ are simple  $\Bbbk {\bf G}$-modules, we get $M_{\lambda(c_0)} \cong E(\lambda)_J$ as $\Bbbk {\bf G}$-modules. The theorem is proved.
\end{proof}

\begin{theorem}\label{onlyifpart}
If $\lambda|_{\bf C}= \mu|_{\bf C}$  and $E(\mu)_K$ is infinite dimensional, then $\op{Ext}_{\bk\bg}^1(E(\lambda)_J, E(\mu)_K)\ne 0$.
\end{theorem}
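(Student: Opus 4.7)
I plan to construct an explicit non-split short exact sequence
$$ 0 \to E(\mu)_K \to M \to E(\lambda)_J \to 0 $$
witnessing the claimed non-vanishing. The starting point is \cite{CD2}: under the hypothesis $\lambda|_\bc = \mu|_\bc$ we have $\op{Ext}^1_{\bk\bg}(\m(\lambda), \m(\mu)) \ne 0$, so fix a non-split extension $0 \to \m(\mu) \to \tilde E \xrightarrow{\pi} \m(\lambda) \to 0$. The idea is then to transport the non-splitting of $\tilde E$ downward along the two submodule filtrations $\{\m(\lambda)_A\}_{A \subset I(\lambda)}$ and $\{\m(\mu)_B\}_{B \subset I(\mu)}$, whose simple subquotients are the irreducibles $E(\lambda)_A$ and $E(\mu)_B$ respectively, so as to isolate the particular pair $(E(\lambda)_J, E(\mu)_K)$.

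Concretely, I would first pull back $\tilde E$ over the inclusion $\m(\lambda)_J \hookrightarrow \m(\lambda)$ to obtain $0 \to \m(\mu) \to \tilde E_J \to \m(\lambda)_J \to 0$ with $\tilde E_J := \pi^{-1}(\m(\lambda)_J)$, and then perform a two-sided descent. The top descent from $\m(\lambda)_J$ to $E(\lambda)_J = \m(\lambda)_J / N(\lambda)_J$ is a quotient that can be carried out by modding out a suitable $\bk\bg$-lift of $N(\lambda)_J$. The bottom descent is more delicate because $E(\mu)_K$ is only a subquotient (not a quotient) of $\m(\mu)$: one has to restrict first to a $\bk\bg$-subextension whose intersection with $\m(\mu)$ lies in $\m(\mu)_K$, and then quotient by $N(\mu)_K$. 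An arguably cleaner route is to work instead with the parabolic-induced modules $\nabla(\lambda)_J$ and $\nabla(\mu)_K$: since $E(\mu)_K$ is the socle of $\nabla(\mu)_K$, constructing a non-split $\bk{\bf P}_{K'}$-extension analogous to \cite{CD2} and parabolically inducing should produce a $\bk\bg$-extension in which the desired short exact sequence appears as a subquotient in a more transparent fashion.

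The main obstacle I anticipate is verifying that the extension so produced is indeed not split, and this is precisely where the infinite-dimensionality hypothesis on $E(\mu)_K$ enters. Were $E(\mu)_K$ finite-dimensional, one could attempt to split via finite averaging, for instance by projecting onto an isotypic component under the action of some finite subgroup $G_k \subset \bg$ (Section~2); this device becomes unavailable in the infinite-dimensional case. More concretely, any putative $\bk\bg$-equivariant section $E(\lambda)_J \to M$ would have to intertwine the action of each unipotent group $\bu_{w_Kw^{-1}}$ on the infinite basis $\{u\dot{w} C(\mu)_K\}_{w \in Z_K(\mu),\, u \in \bu_{w_Kw^{-1}}}$ of $E(\mu)_K$ furnished by Proposition~\ref{DesEJ}, and the cocycle inherited from the non-split $\tilde E$ of \cite{CD2} should obstruct this compatibility on each of the infinitely many $\bu_{w_Kw^{-1}}$-orbits. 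Making this combinatorial bookkeeping over $Z_K(\mu)$ rigorous, and in particular checking that the obstruction survives both sides of the descent, is the technical heart of the argument.
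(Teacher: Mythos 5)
The central step of your plan --- transporting the non-splitness of the \cite{CD2} extension $0\to\m(\mu)\to\tilde E\to\m(\lambda)\to 0$ down to the specific pair $(E(\lambda)_J,E(\mu)_K)$ by pulling back along $\m(\lambda)_J\hookrightarrow\m(\lambda)$ and then passing to subquotients on both sides --- is a genuine gap, not merely a technical one. Pullback, pushout and passage to subquotients can (and in general do) turn a non-split extension into a split one, so non-splitness does not ``descend'' along the filtrations. What d\'evissage along the two composition series actually yields from $\op{Ext}^1_{\bk\bg}(\m(\lambda),\m(\mu))\ne 0$ is only that $\op{Ext}^1_{\bk\bg}(E(\lambda)_A,E(\mu)_B)\ne 0$ for \emph{some} pair $(A,B)$; the theorem asserts this for \emph{every} $J\subset I(\lambda)$ and every $K\subset I(\mu)$ with $E(\mu)_K$ infinite dimensional, and no bookkeeping over $Z_K(\mu)$ will steer the obstruction onto a prescribed pair. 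Your own caveat (``the main obstacle \dots is verifying that the extension so produced is indeed not split'') marks exactly the point where the argument has no content yet: the finite-averaging heuristic explains why finite-dimensionality should force vanishing (the converse direction, cf.\ the Remark following the theorem), but it does not produce a non-split extension in the infinite-dimensional case. The same objection applies to the parabolic-induction variant, where in addition $E(\mu)_K$ sits as the socle of $\nabla(\mu)_K$ rather than a quotient, so the bottom descent is again not an exact-functor operation on extensions.

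The paper's proof is entirely different and constructive: it builds $M$ as a direct limit of the $\bk G_i$-modules $M_i=E_i(\lambda)_J\oplus E_i(\mu)_K$ with \emph{twisted} transition maps $f_i(C(\lambda)_J)=C(\lambda)_J+\xi_i$, $f_i|_{E_i(\mu)_K}=\mathrm{id}$, where $\xi_i=\xi_{i,u_i}$ is an explicit element of $E_{i+1}(\mu)_K\setminus E_i(\mu)_K$. The technical heart is Proposition~\ref{keylemma} together with Lemmas~\ref{owow'}--\ref{Omega-Gamma}: a counting argument, comparing degrees in $q$ of $|\Omega|$ and $|\Gamma|$, produces $u_i\in U_{i+1}$ with $\xi_{i,u_i}\ne 0$; the hypothesis that $E(\mu)_K$ is infinite dimensional is what allows $\xi_i$ to escape $E_i(\mu)_K$ at every stage. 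Non-splitness is then proved by showing $M$ has no ${\bf U}'_{w_J}$-fixed vector outside $E(\mu)_K$, again by a cardinality comparison ($|G_i|<|U'_{w_J,i+1}|$ for large $i$). The hypothesis $\lambda|_{\bf C}=\mu|_{\bf C}$ is needed only so that Theorem~\ref{ifpart} does not force splitting; it is not the engine of the construction, contrary to what your reliance on \cite{CD2} suggests. To salvage your plan you would need an independent mechanism that selects the pair $(J,K)$; the paper's explicit $\xi_{i,u}$ is precisely that mechanism.
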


\begin{remark} \normalfont
If $\text{char}\ \Bbbk \geq 5$  and $E(\mu)_K$ is finite dimensional,  we always have $\op{Ext}_{\bk\bg}^1(E(\lambda)_J, E(\mu)_K)=0$  by \cite[Theorem3.1]{CD2}.
\end{remark}

To prove Theorem \ref{onlyifpart}, we will  construct a non-split  exact sequence
\begin{equation}\label{exactseq}
0 \rightarrow E(\mu)_K  \rightarrow M \rightarrow E(\lambda)_J \rightarrow 0.
\end{equation}
For any integer $i$, denote $E_i(\lambda)_J= \Bbbk G_i C(\lambda)_J$ and  let $M_i= E_i (\lambda)_J \oplus  E_i(\mu)_K$, which is a $\Bbbk G_i$-module.  Let $J'=I(\lambda)\backslash J$. The character $\lambda$ can be regarded as a character of $P_{J'}$. For each $i$ and  $u\in U_{i+1}$,  we set
$$\xi_{i, u}=(\sum_{w\in W_J} (-1)^{\ell(w)}w) \sum_{p\in P_{J', i}} \lambda(p)^{-1}pu  w_0 C(\mu)_K \in E_{i+1}(\mu)_K.$$
The key result used to prove the non-splitness of \eqref{exactseq} is the following

\begin{proposition} \label{keylemma}
For sufficiently large integer $i$, there exists $u\in U_{i+1}$ such that $\xi_{i,u}$ is nonzero.
\end{proposition}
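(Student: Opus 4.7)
The plan is to choose $u \in U_{i+1}$ cleverly and expand $\xi_{i,u}$ in the basis of $E(\mu)_K$ given by Proposition \ref{DesEJ}, showing that at least one coefficient is a nonzero polynomial in parameters of $u$ which can be made nonzero by specialization. Concretely, I would take $u$ of the generic form $u = \prod_{\alpha \in \Phi^+} \varepsilon_\alpha(x_\alpha)$ with $x_\alpha \in \f_{q^{(i+1)!}}$ free parameters to be fixed at the end, where the product is taken in a once-and-for-all ordering of the positive roots.

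The first step is to expand $u \dot{w}_0 C(\mu)_K$ in the basis of Proposition \ref{DesEJ}. Running through a reduced expression for $w_0$ from the right and applying Proposition \ref{suwformula} step by step slides the factors of $u$ across $\dot{w}_0$; since $w_0(\Phi^+) = \Phi^-$, this conjugation produces elements of negative root subgroups that recombine with the $U_{w_Kw^{-1}}$-factors of the basis. The output is an explicit expansion whose coefficients are rational expressions in the $x_\alpha$ together with values $\mu^w(t)$ arising from Proposition \ref{suwformula}(i).

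Next, I would analyze the inner averaging $\sum_{p \in P_{J', i}} \lambda(p)^{-1} p$. Decomposing $P_{J'} = L_{J'} \ltimes U_{J'}$, the sum over $L_{J', i}$ acts as a character projection onto the $\lambda$-isotypic piece for the Levi (this is where $\lambda|_{\bc} = \mu|_{\bc}$ is used, removing the central obstruction of Theorem \ref{ifpart}), while the sum over $U_{J', i}$ contributes a unipotent averaging. For $i$ large, the finite group $P_{J', i}$ is rich enough to genuinely detect $\lambda$-eigenvectors. Finally, the outer alternating sum $\sum_{w \in W_J}(-1)^{\ell(w)} w$ imposes the $W_J$-antisymmetry modelled on $C(\lambda)_J$. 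I would then pinpoint one specific basis vector $v \dot{w} C(\mu)_K$, with $w \in Z_K(\mu)$ chosen to lie in the Bruhat cell hit by $u\dot{w}_0 C(\mu)_K$, whose coefficient in $\xi_{i,u}$ reduces to a nonzero polynomial in the $x_\alpha$; specializing the parameters then forces $\xi_{i,u} \neq 0$.

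The main obstacle is the combinatorial bookkeeping of this three-fold process (Bruhat slide, Levi character projection, $W_J$-antisymmetrization), since a priori these could conspire to cancel every basis coefficient. The crucial ingredient preventing total cancellation is $\lambda|_{\bc} = \mu|_{\bc}$: without it the $P_{J', i}$-projection onto the $\lambda$-weight would annihilate $E(\mu)_K$ entirely, recovering Theorem \ref{ifpart}; with it the projection is compatible, and the infinite-dimensionality of $E(\mu)_K$ supplies enough Bruhat cells in $Z_K(\mu)$ to host a surviving coefficient once $i$ is taken large enough for $\f_{q^{(i+1)!}}$ to admit the required specialization.
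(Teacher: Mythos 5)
There is a genuine gap. Your plan correctly identifies the goal (exhibit one basis coefficient of $\xi_{i,u}$ that survives), but it never supplies a mechanism that rules out cancellation; the sentence ``whose coefficient in $\xi_{i,u}$ reduces to a nonzero polynomial in the $x_\alpha$'' is precisely the assertion to be proved, and neither the $\lambda|_{\bf C}=\mu|_{\bf C}$ hypothesis nor the infinite-dimensionality of $E(\mu)_K$ yields it by itself. Worse, the ``nonzero polynomial plus specialization'' framing does not really apply: the coefficient of a fixed basis vector in $\sum_{w\in W_J}(-1)^{\ell(w)}w\sum_{p\in P_{J',i}}\lambda(p)^{-1}p\,u\dot{w_0}C(\mu)_K$ is not a polynomial in the $x_\alpha$, because which basis vectors a term $\dot{w}pu\dot{w_0}C(\mu)_K$ hits depends on which Bruhat cell $\dot{w}pu$ lies in, i.e.\ on a case analysis in $u$, not on an algebraic formula. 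Your Levi/unipotent ``character projection'' picture also misses that for $u\in U_i$ the whole expression can in fact vanish (the paper observes $\Omega\cap U_i=\emptyset$), so the choice of $u$ genuinely outside $U_i$ is essential and cannot be finessed by genericity of parameters alone.

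The paper's actual mechanism is different and worth internalizing. It tracks only the \emph{diagonal} coefficient, namely that of $u\dot{w_0}C(\mu)_K$ itself, and proves the Claim $(\clubsuit)$: for a suitable $u\in U_{i+1}$, no $g\in G_i\setminus Z(G_i)$ satisfies $gu\dot{w}\in u\dot{w_0}B_{i+1}$. Consequently the only terms of the triple sum contributing to that coefficient come from central elements, and these cannot cancel; no expansion of the other coefficients is ever needed. The existence of such a $u$ is a counting argument exploiting the jump from $q^{i!}$ to $q^{(i+1)!}$: Lemmas \ref{owow'} and \ref{Omegageq} bound the bad sets $\Omega'_w$ and show $\Omega\neq\emptyset$ by the pigeonhole principle applied to the partition $\mathscr{H}$, and Lemma \ref{Omega-Gamma} removes the set $\Gamma$ where some non-central element of $G_i$ conjugates back into $B_{i+1}$, using that the relevant rational functions $g_j$ are non-constant so each fiber $\mathcal{S}_u$ is small. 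To repair your proof you would need to replace the ``nonzero polynomial'' step with some comparable quantitative genericity statement for $u$ relative to $G_i$.
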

To prove this proposition, we need some preliminaries. Let $w_0$ be the longest element of $W$. For any $w\in W$, define
$$\Omega_w=\{x\in U_{i+1}\mid\dot{w} x\dot{w_0}  C(\mu)_K\in \Bbbk U_{i+1}w_0C(\mu)_K\}.$$
Set $ \Omega'_w= U_{w, i+1} \cap \Omega^c_w$, where $\Omega^c_w$ is the complementary set of $\Omega_w$ in $U_{i+1}$. For any $u\in U_{i+1}$, we denote $H_u=  \bigcup_{t\in T_i}U_i t u t^{-1}$,  which is a subset of $U_{i+1}$.
Now we set
$$ \Omega =\{u\in U_{i+1} \mid H_u \subset \bigcap_{w\in W} \Omega_w \}.$$
Thus if $u\in  \Omega$, then we have $\xi_{i,u} \in  \Bbbk U_{i+1}w_0C(\mu)_K$.
By the setting of  $\Omega$, it is easy to see that $ \Omega \cap U_i =\emptyset$.
We abbreviate $n=|\Phi^+|$ and $r=|I|$ in the sequel. We need the following technical Lemmas to prove Proposition \ref{keylemma}.

\begin{lemma}\label{owow'}
For any $w\in W$,  one has that $|\Omega'_w| \leq  (3\widetilde{q})^{\ell(w)-1}$ and $|\Omega_w|\ge\widetilde{q}^{n-1} ( \widetilde{q}-3^{\ell(w)-1})$, where  $\widetilde{q}= q^{(i+1)!}$.
\end{lemma}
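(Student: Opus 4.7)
The plan is to prove the bound on $|\Omega'_w|$ by induction on $\ell(w)$, and then deduce the lower bound on $|\Omega_w|$ as a consequence using the decomposition $U_{i+1}=U_{w,i+1}\cdot U'_{w,i+1}$.

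\textbf{Base case.} When $w=e$, $\Omega'_e=\emptyset$ trivially. When $\ell(w)=1$, say $w=s_j$, applying Proposition \ref{suwformula}(i) with $w=w_0$, $\theta=\mu$, $J=K$ (valid since $s_jw_0<w_0$) yields
\[
\dot{s_j}\varepsilon_j(a)\dot{w_0}C(\mu)_K=\mu^{w_0}\!\bigl(\dot{s_j}h_j(a)\dot{s_j}\bigr)f_j(a)\dot{w_0}C(\mu)_K\in\Bbbk U_{i+1}w_0C(\mu)_K
\]
for every $a\in\bar{\mathbb F}_q^*$. Only $a=0$ can potentially lie in $\Omega'_{s_j}$, so $|\Omega'_{s_j}|\leq 1=(3\widetilde q)^0$.

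\textbf{Inductive step.} For $\ell(w)\geq 2$, choose a simple reflection $s_i$ with $s_iw<w$ and write $w=s_iw'$. The root-system identity $\Phi_w^-=\Phi_{w'}^-\sqcup\{\beta\}$ with $\beta:=w'^{-1}(\alpha_i)\in\Phi^+$ yields the factorization $U_{w,i+1}=U_{\beta,i+1}\cdot U_{w',i+1}$. Writing $x=\varepsilon_\beta(c)y$ with $c\in\f_{\widetilde q}$ and $y\in U_{w',i+1}$, and using the commutation relation $\dot{w'}\varepsilon_\beta(c)\dot{w'}^{-1}=\varepsilon_{\alpha_i}(\kappa c)$ for a nonzero structure constant $\kappa$, we get
\[
\dot{w}x\dot{w_0}C(\mu)_K=\dot{s_i}\varepsilon_{\alpha_i}(\kappa c)\cdot\dot{w'}y\dot{w_0}C(\mu)_K.
\]
I then split the count of bad $x$'s into two contributions. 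For $y\in\Omega'_{w'}$, every value of $c$ may yield a bad $x$, contributing at most $\widetilde q\cdot|\Omega'_{w'}|\leq\widetilde q(3\widetilde q)^{\ell(w')-1}$ elements. For $y\notin\Omega'_{w'}$, expand $\dot{w'}y\dot{w_0}C(\mu)_K=\sum_v b_v v\dot{w_0}C(\mu)_K$ with $v\in U_{i+1}$, write each $v=\varepsilon_{\alpha_i}(a_v)v_0$ with $v_0$ supported on $\Phi^+\setminus\{\alpha_i\}$, and apply Proposition \ref{suwformula}(i) termwise together with Chevalley's commutator relations; a careful analysis shows that the resulting expression lies in $\Bbbk U_{i+1}w_0C(\mu)_K$ except for at most two values of $c$, contributing at most $2\widetilde q^{\ell(w')}$ elements. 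Summing,
\[
|\Omega'_w|\leq\widetilde q(3\widetilde q)^{\ell(w')-1}+2\widetilde q^{\ell(w')}=\widetilde q^{\ell(w)-1}\bigl(3^{\ell(w)-2}+2\bigr)\leq(3\widetilde q)^{\ell(w)-1},
\]
since $3^{\ell(w)-2}+2\leq 3^{\ell(w)-1}$ for $\ell(w)\geq 2$. This closes the induction.

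\textbf{Second inequality.} Writing any $x\in U_{i+1}$ as $x=x_1x_2$ with $x_1\in U_{w,i+1}$ and $x_2\in U'_{w,i+1}$, one computes
\[
\dot{w}x\dot{w_0}C(\mu)_K=\dot{w}x_1\dot{w_0}\cdot(\dot{w_0}^{-1}x_2\dot{w_0})C(\mu)_K,
\]
and an analysis using the internal structure of $E(\mu)_K$ shows that whether this lies in $\Bbbk U_{i+1}w_0C(\mu)_K$ is determined by $x_1$. Hence $\Omega_w^c\subseteq\Omega'_w\cdot U'_{w,i+1}$, which gives $|\Omega_w^c|\leq|\Omega'_w|\,\widetilde q^{n-\ell(w)}\leq(3\widetilde q)^{\ell(w)-1}\widetilde q^{n-\ell(w)}=3^{\ell(w)-1}\widetilde q^{n-1}$, and therefore $|\Omega_w|\geq\widetilde q^{n-1}(\widetilde q-3^{\ell(w)-1})$.

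\textbf{Main obstacle.} The chief difficulty is the ``good $y$'' subcase of the inductive step: bounding by a constant (specifically, by $2$) the number of $c\in\f_{\widetilde q}$ for which the sum $\sum_v b_v\dot{s_i}\varepsilon_{\alpha_i}(\kappa c)v\dot{w_0}C(\mu)_K$ escapes $\Bbbk U_{i+1}w_0C(\mu)_K$. Since the $a_v$'s appearing in the expansion can a priori take many distinct values, this uniform bound must come from a structural cancellation after applying Proposition \ref{suwformula}(i), exploiting the specific shape of the expansion of $\dot{w'}y\dot{w_0}C(\mu)_K$ and the Chevalley relations in $U_{i+1}$. Establishing this count cleanly is the bulk of the technical work.
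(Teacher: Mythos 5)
Your overall architecture matches the paper's (induction on $\ell(w)$ for the bound on $|\Omega'_w|$, base case $|\Omega'_s|\le 1$ via Proposition \ref{suwformula}, and the reduction of $\Omega_w$ to $\Omega'_w$ through the factorization of $U_{i+1}$), but your inductive step contains a genuine gap that you yourself flag and do not close. You peel off a \emph{left} descent, $w=s_iw'$, so that the new root subgroup $U_{\beta,i+1}$ ends up \emph{outermost}: after conjugation you must act by $\dot{s_i}\varepsilon_{\alpha_i}(\kappa c)$ on the already-expanded element $\dot{w'}y\dot{w_0}C(\mu)_K=\sum_v b_v v\dot{w_0}C(\mu)_K$. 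The bad values of $c$ are then those with $\kappa c=-a_v$ for some $v$ occurring in the expansion, and nothing in your argument controls how many distinct values $a_v$ can take; a priori this is as large as the number of terms, which can grow with $\widetilde q$. Your claimed uniform bound of $2$ is exactly the assertion that would make the recursion $|\Omega'_w|\le\widetilde q(3\widetilde q)^{\ell(w')-1}+2\widetilde q^{\ell(w')}$ work, and it is stated without proof; as written the induction does not close.

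The paper avoids this problem by peeling off a \emph{right} descent instead: writing $w=vs$ with $\ell(w)=\ell(v)+1$ and ${\bf U}_w=({\bf U}_v)^s{\bf U}_s$, the factor $z\in{\bf U}_s$ sits \emph{innermost}, directly against $\dot{w_0}C(\mu)_K$, so Proposition \ref{suwformula} applies to $\dot{s}z\dot{w_0}C(\mu)_K$ before any expansion and produces at most two terms ($f(z)\dot{w_0}C(\mu)_K$ and $\dot{w_0}C(\mu)_K$). Each term is then controlled by the inductive set $\Omega'_v$, which is where the factor $2$ (hence the $3$ in $(3\widetilde q)^{\ell(w)-1}$) legitimately comes from, via $\widetilde q^{\ell(w)}-|\Omega'_w|\ge(\widetilde q-1)(\widetilde q^{\ell(v)}-2|\Omega'_v|)$. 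If you want to salvage your write-up, switch to the right-descent factorization. Separately, in your deduction of the second inequality you use the order $x=x_1x_2$ with $x_1\in U_{w,i+1}$ first; then $\dot{w_0}^{-1}x_2\dot{w_0}$ lands in the \emph{opposite} unipotent group and does not obviously fix $\Bbbk U_{i+1}w_0C(\mu)_K$. Use the order $x=y'y$ with $y'\in U'_{w,i+1}$ on the left, so that $\dot{w}y'\dot{w}^{-1}\in U_{i+1}$ and the reduction to $U_{w,i+1}$ is immediate; this gives the exact count $|\Omega_w|=\widetilde q^{\,n-\ell(w)}(\widetilde q^{\,\ell(w)}-|\Omega'_w|)$ used in the paper.
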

\begin{proof}
 Noting that $U_{i+1}=U'_{w,i+1}U_{w,i+1}$, and hence for any $x\in\Omega_w$, $x=y'y$ for unique $y'\in U_{w,i+1}'$ and $y\in U_{w,i+1}$. Thus we have $$\dot{w}x\dot{w_0}C(\mu)_K=\dot{w}y'\dot{w}^{-1}\dot{w}y\dot{w_0}  C(\mu)_K\in\Bbbk U_{i+1}w_0C(\mu)_K.$$
Since $\dot{w}y'\dot{w}^{-1}\in U_{i+1}$, we get $\dot{w}y\dot{w_0}C(\mu)_K\in\Bbbk U_{i+1}w_0C(\mu)_K$   and hence $y\in\Omega_w\cap U_{w,i+1}$.
It follows that
\begin{equation}\label{omegaw}
|\Omega_w|=|U'_{w,i+1}||U_{w,i+1}\backslash\Omega'_w|=\widetilde{q}^{n-\ell(w)} (\widetilde{q}^{\ell(w)}-|\Omega'_w|)
\end{equation}
We will show the first statement by induction on $\ell(w)$.  It is easy to see that $|\Omega'_s|= 1$ for any simple reflection $s\in S$.
Now let $\ell(w) \geq 1$ and  $s\in  \mathscr{R}(w)$. Then we denote $w=vs$ which satisfies that  $\ell(w)= \ell(v)+1$. Note that ${\bf U}_w=({\bf U}_v)^{s} {\bf U}_s$. Thus
 for $x\in  U_{w, i+1}$, we can write $x=yz$,  where  $y\in ({\bf U}_v)^{s} $ and $z \in {\bf U}_s$.  Now assume that $z\ne e$.
 By Proposition \ref{suwformula}(i), if $sw_0w_K \leq w_0w_K$, then   we have
 $$ \dot{w}x\dot{w_0}  C(\mu)_K= a \dot{v} y^s f(z) \dot{w_0} C(\mu)_K$$
 for some $a\in \Bbbk$ and $f(z) \in  {\bf U}_s$.  If $sw_0w_K \geq w_0w_K$, then we have
 $$ \dot{w}x\dot{w_0}  C(\mu)_K= b\dot{v}y^s (f(z) -1) \dot{w_0} C(\mu)_K$$
 for some $b\in \Bbbk$ and $f(z) \in  {\bf U}^*_s$ by Proposition \ref{suwformula}(ii). In both cases, we get
 $$ |U_{w,i+1}\backslash\Omega'_w | \geq  (\widetilde{q}-1)(|U_{v,i+1}|-2|\Omega'_v|),$$
 which implies that
 $$\widetilde{q}^{\ell(w)}- |\Omega'_w| \geq (\widetilde{q}-1)(\widetilde{q}^{\ell(v)}- 2|\Omega'_v|).$$
By the inductive hypothesis $|\Omega'_v| \leq (3\widetilde{q})^{\ell(v)-1}$,  we get   $ |\Omega'_w|  \leq (3\widetilde{q})^{\ell(w)-1}$. Combining this and \eqref{omegaw}, we get $|\Omega_w|\geq\widetilde{q}^{n-1} ( \widetilde{q}-3^{\ell(w)-1})$.
\end{proof}

\begin{lemma}\label{Omegageq}
One has that $|\Omega| \geq q^{(i+1)!(n-1)}( q^{ i! (i-r-n+1)}-P)$, and in particular, $\Omega$ is nonempty when $i$ is large enough, where $P=\displaystyle \sum_{w\in W} 3^{\ell(w)-1}$.
\end{lemma}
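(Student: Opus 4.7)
The plan is a two-stage counting argument: first bound $\bigl|(\bigcap_{w\in W}\Omega_w)^c\bigr|$ using Lemma \ref{owow'} and the union bound, and then bound $|\Omega^c|$ via a preimage count for the map $u\mapsto H_u$.

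For the first stage, Lemma \ref{owow'} rearranges to
$$|\Omega_w^c|=|U_{i+1}|-|\Omega_w|\le\widetilde{q}^{\,n}-\widetilde{q}^{\,n-1}(\widetilde{q}-3^{\ell(w)-1})=\widetilde{q}^{\,n-1}\cdot 3^{\ell(w)-1}.$$
Setting $\Omega_0:=\bigcap_{w\in W}\Omega_w$ and applying the union bound to $\Omega_0^c=\bigcup_{w\in W}\Omega_w^c$, I obtain $|\Omega_0^c|\le\widetilde{q}^{\,n-1}P$.

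For the second stage, $u\notin\Omega$ is equivalent to $v:=u'tut^{-1}\in\Omega_0^c$ for some $u'\in U_i$ and $t\in T_i$; solving for $u$ gives $u=t^{-1}(u')^{-1}vt$. Since $U_i$ is $T$-stable, the set $\{u\in U_{i+1}:v\in H_u\}$ is contained in $\bigcup_{t\in T_i}U_i(t^{-1}vt)$, which is a union of at most $|T_i|$ cosets of $U_i$, giving the preimage bound $|T_i|\cdot|U_i|\le q^{i!(n+r)}$. Hence
$$|\Omega^c|\le|\Omega_0^c|\cdot|T_i|\cdot|U_i|\le\widetilde{q}^{\,n-1}P\,q^{i!(n+r)}.$$
Subtracting from $|U_{i+1}|=\widetilde{q}^{\,n}$ and using $\widetilde{q}=q^{(i+1)!}=q^{i!(i+1)}$, the bound $|\Omega|\ge q^{(i+1)!(n-1)}\bigl(q^{i!(i-r-n+1)}-P\bigr)$ follows after factoring $q^{i!(n+r)}$ out of the parenthesis $q^{i!(i+1)}-P\,q^{i!(n+r)}$. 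Since $P$ depends only on $W$, once $i$ is large enough that $i-r-n+1\ge 1$ and $q^{i!(i-r-n+1)}>P$, the right-hand side is positive and hence $\Omega\ne\emptyset$.

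The main obstacle is the bookkeeping required to convert the raw estimate $|\Omega|\ge\widetilde{q}^{\,n-1}(\widetilde{q}-P\,q^{i!(n+r)})$ into the exact form stated in the lemma; the preimage-counting step is otherwise routine, and the explicit polynomial bound is valuable only insofar as it forces $\Omega\ne\emptyset$ for all sufficiently large $i$, which is precisely what is needed in the proof of Proposition \ref{keylemma}.
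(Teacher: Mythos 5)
Your argument is correct, and your second stage is a genuinely different counting device from the paper's. The paper first proves that the sets $H_u$ form a partition of $U_{i+1}$ (checking $H_x\cap H_y\ne\emptyset\Rightarrow H_x=H_y$), bounds the number of classes from below by $q^{(i+1)!n}/q^{i!(n+r)}$, and then applies an inclusion--exclusion/pigeonhole count to conclude that at least $|\mathscr{H}|-\sum_{w}|\Omega_w^c|$ classes lie entirely inside $\bigcap_w\Omega_w$, each contributing at least one element to $\Omega$. You instead bound $|\Omega^c|$ directly: every $u\notin\Omega$ satisfies $v\in H_u$ for some $v\in\Omega_0^c$, and the fiber $\{u: v\in H_u\}\subset\bigcup_{t\in T_i}U_i(t^{-1}vt)$ has size at most $|T_i||U_i|\le q^{i!(n+r)}$. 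This avoids the partition verification entirely and yields the sharper estimate $|\Omega|\ge\widetilde{q}^{\,n-1}\bigl(\widetilde{q}-Pq^{i!(n+r)}\bigr)$, whose leading term is $\widetilde{q}^{\,n}$ rather than the paper's $q^{i!(ni-r)}$; both arguments rest on the same two inputs (Lemma \ref{owow'} and the bound $|T_i||U_i|\le q^{i!(n+r)}$), and either suffices for Proposition \ref{keylemma}. One cosmetic point: dropping the factor $q^{i!(n+r)}\ge 1$ to reach the exact form stated in the lemma is legitimate only when the parenthesis $q^{i!(i-r-n+1)}-P$ is nonnegative, but when it is negative the lemma's inequality is vacuous, so nothing is lost.
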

\begin{proof}
Recall  the notation $H_u=  \bigcup_{t\in T_i}U_i t u t^{-1}$, where $u\in U_{i+1}$.
For any $x, y\in U_{i+1}$, we claim that $H_x=H_y$ or $H_x \cap H_y= \emptyset$.  Indeed, if $H_x \cap H_y\ne \emptyset$, then we have
$u_1 t_1xt^{-1}_1=u_2 t_2yt^{-1}_2$,  where  $u_1,u_2\in U_i$ and $ t_1,t_2\in T_i.$ Thus we get
$$ y= t^{-1}_2u^{-1}_2 u_1t_1xt^{-1}_1t_2= (t^{-1}_2u^{-1}_2 u_1t_2)   (t^{-1}_2t_1xt^{-1}_1t_2) \in H_x. $$
Thus we have  $H_y\subset H_x$ by easy calculation. By the same discussion,  we also have $H_x\subset H_y$, which implies that $H_x=H_y$.
Set $\mathscr{H}=\{H_u\mid u\in U_{i+1}\}$. Then $\mathscr{H}$ gives a partition of  $U_{i+1}$.
Noting that $|H_u|  \leq q^{i!(n+r)}$ for any $u\in U_{i+1}$, thus  we get
\begin{equation}\label{mathscrH}
|\mathscr{H}|\geq \frac{q^{(i+1)!n}}{q^{i!(n+r)}}=q^{i! (ni-r)}.
\end{equation}
Moreover, the Pigeonhole Principle implies that the cardinality of $H_u$'s satisfying $H_u\subset\bigcap_{w\in W}\Omega_w$ is at least $|\mathscr{H}|+|\bigcap_{w\in W}\Omega_w|-|U_{i+1}|$, and hence
\begin{align*}|\Omega| &\  \geq |\mathscr{H}|-\sum_{w\in W} |\Omega^c_w| \geq q^{i! (ni-r)} - \sum_{w\in W} |U'_{w,i+1}|  |\Omega'_w| \\
&\  \geq  q^{i!(ni-r)}- P q^{(i+1)!(n-1)},
\end{align*}
by Lemma \ref{owow'} and \eqref{mathscrH}.
Thus when $i$ is large enough,  we have
$$|\Omega| \geq q^{(i+1)!(n-1)}( q^{ i! (i-r-n+1)}-P)>0,$$
and the degree of the right side as a polynomial of $q$ is $i!(ni-r)$.
\end{proof}

\begin{lemma}\label{Omega-Gamma}
We let $$\Gamma=\{u\in\Omega  \mid \dot{w_0}^{-1} u^{-1} \big  (G_i\backslash Z(G_i) \big) u\dot{w_0} \cap B_{i+1} \ne \emptyset \}.$$
When $i$ is large enough, one has  that $\Omega\backslash\Gamma$ is nonempty. In particular, there exists $u\in \Omega$  such that
$$\dot{w_0}^{-1} u^{-1} (G_i\backslash Z(G_i)) u\dot{w_0} \cap B_{i+1} =\emptyset. $$
\end{lemma}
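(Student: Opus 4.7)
The plan is to show that $|\Omega|>|\Gamma|$ for $i$ sufficiently large, from which $\Omega\setminus\Gamma\neq\emptyset$ is immediate. Writing $B^-_{i+1}:=\dot{w_0}B_{i+1}\dot{w_0}^{-1}$, the condition $u\in\Gamma$ is the existence of some $g\in G_i\setminus Z(G_i)$ with $u^{-1}gu\in B^-_{i+1}$. Switching the order of the quantifier over $g$ and the counting in $u$ yields
\[
|\Gamma|\;\leq\;\sum_{g\in G_i\setminus Z(G_i)}\bigl|\{u\in U_{i+1}\mid u^{-1}gu\in B^-_{i+1}\}\bigr|,
\]
so I am reduced to bounding, for each non-central $g$, the number of $u\in U_{i+1}$ that conjugate $g$ into the opposite Borel.

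The geometric heart of the proof is that for every $g\in{\bf G}\setminus Z({\bf G})$, the closed subvariety $V_g:=\{u\in{\bf U}\mid u^{-1}gu\in{\bf B}^-\}$ of ${\bf U}$ is \emph{proper}, i.e.\ $\dim V_g<n=|\Phi^+|$, where ${\bf B}^-:=\dot{w_0}{\bf B}\dot{w_0}^{-1}$. Indeed $u\in V_g$ is equivalent to $g\in u{\bf B}^-u^{-1}$; under the identification of ${\bf U}$ with the open Bruhat cell of the flag variety ${\bf G}/{\bf B}^-$ (via $u\mapsto u{\bf B}^-$), this exhibits $V_g$ as the intersection of that open cell with the closed $g$-fixed subvariety of ${\bf G}/{\bf B}^-$. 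If $V_g={\bf U}$, then the $g$-fixed subvariety would contain the open cell and hence, by closedness and irreducibility, equal all of ${\bf G}/{\bf B}^-$; this means $g$ lies in every Borel of ${\bf G}$, i.e., in the classical intersection $\bigcap_{x\in{\bf G}} x{\bf B}^-x^{-1}=Z({\bf G})$, contradicting $g\notin Z({\bf G})$.

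Given $\dim V_g<n$, the defining polynomial equations for $V_g\subset{\bf U}\cong\mathbb{A}^n$ have degree bounded in terms of ${\bf G}$ alone, so a uniform Lang--Weil estimate gives $|V_g\cap U_{i+1}|\leq C\widetilde q^{n-1}$ with $C$ independent of $g$ and $i$. Combined with the crude bound $|G_i|\leq q^{i!\dim{\bf G}}=q^{i!(2n+r)}$, this yields
\[
|\Gamma|\;\leq\;C' q^{i!(2n+r)+(i+1)!(n-1)}\;=\;C' q^{i!(in+3n+r-i-1)}.
\]
On the other hand Lemma \ref{Omegageq} gives $|\Omega|\geq q^{i!(in-r)}-Pq^{i!(in+n-i-1)}$, whose leading exponent is $i!(in-r)$. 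Since $(in-r)-(in+3n+r-i-1)=i-2r-3n+1$ is positive whenever $i>2r+3n-1$, for all such $i$ we have $|\Omega|>|\Gamma|$, finishing the proof.

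The main obstacle I anticipate is the dimension estimate $\dim V_g<n$, which rests on the classical (but nontrivial) fact that the intersection of all Borel subgroups of a connected reductive group equals its centre; together with the uniformity of the Lang--Weil constant in $g$, which follows from the uniform polynomial-degree bound on the defining equations of $V_g$. The remaining exponent comparison is elementary.
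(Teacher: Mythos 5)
Your argument is correct, but it takes a genuinely different route from the paper's. The paper decomposes $\Gamma=\bigcup_{w\in W}\Gamma_w$ along Bruhat cells $B_iwB_i$, disposes of $w=e$ directly from the defining property of $\Omega$, and for $w\ne e$ writes the relevant conjugates in root-subgroup coordinates, observing that at least one coordinate function $g$ is a non-constant rational function and bounding the fibres of $g$ by $kq^{(n-1)(i+1)!}$. You instead exchange the quantifiers, fix a non-central $g\in G_i$, and bound the set $V_g=\{u\in{\bf U}\mid u^{-1}gu\in{\bf B}^-\}$ by a fixed-point argument on the flag variety: if $V_g$ met the open cell densely then the closed fixed-point locus of $g$ on ${\bf G}/{\bf B}^-$ would be everything, forcing $g\in\bigcap_x x{\bf B}^-x^{-1}=Z({\bf G})$; hence $\dim V_g\le n-1$ and a uniform degree/Lang--Weil bound gives $|V_g\cap U_{i+1}|\le C\widetilde q^{\,n-1}$. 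Your route is more conceptual and avoids the paper's somewhat delicate manipulation of the elements $\sigma_w(x)$ and the claim that ``not all $g_1,\dots,g_n$ are polynomials,'' at the cost of invoking the classical fact on the intersection of all Borels and the uniformity of the point-counting constant in $g$ (both standard, and you flag them correctly); it also pays an extra factor $|G_i|\approx q^{i!\dim{\bf G}}$ from the union bound over $g$, which is harmless since it is swamped by the $(i+1)!$-scale gap. One small imprecision: $|G_i|\le q^{i!(2n+r)}$ presumes $\dim{\bf T}=r=|I|$, which fails when $Z({\bf G})$ has positive dimension (e.g.\ ${\bf G}=\GL_n$); replacing $r$ by $\dim{\bf T}$ only shifts the threshold on $i$ and does not affect the conclusion. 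Your final exponent comparison $i-2r-3n+1>0$ is correct and plays the same role as the paper's inequality $(i+1)!-i!(2n+r)>0$.
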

\begin{proof}
For each $w\in W$, we set
$$\Gamma_w=\{u\in\Omega  \mid \dot{w_0}^{-1} u^{-1} \big (B_iwB_i  \cap (G_i\backslash Z(G_i)) \big) u\dot{w_0} \cap B_{i+1} \ne \emptyset \}.$$
Thus we have $\Gamma=\bigcup_{w\in W} \Gamma_w$. We will estimate $|\Gamma_w|$ for any $w\in W$.
For any simple root $\alpha$, denote  $\pi_\alpha: {\bf U}\rightarrow {\bf U}_{\alpha}$ the projection.  For  $u\in \Omega$, then we get $\pi_\alpha(u) \notin U_{\alpha, i}$ for any simple root $\alpha$.  We claim that for any $u\in \Omega$,
$$\dot{w_0} ^{-1} u^{-1} \big (B_i \cap (G_i\backslash Z(G_i)) \big) u\dot{w_0} \cap B_{i+1}=\emptyset.$$
Indeed, for $x\in U_i$ and $t\in T_i$ such that $x\ne e$ and $t\notin Z(G_i)$, we write
$$ \dot{w_0} ^{-1} u^{-1} xt u\dot{w_0}=\dot{w_0} ^{-1} (u^{-1} x(t ut^{-1})) w_0 (\dot{w_0} ^{-1}tw_0).$$
Note that   $u^{-1} x(t ut^{-1}) \ne e$ for any $u\in \Omega$  when $x\ne e$ and $t\notin Z(G_i)$.   Therefore  we see that  $$\dot{w_0} ^{-1} (u^{-1} x(t ut^{-1})) w_0\notin B_{i+1}$$ and the
claim is proved. In particular,  we get $\Gamma_e=\emptyset$.

For $w\in W$ and $w\ne e$,  we denote
$$\widetilde{\Gamma}_w= \{x\in \Omega \mid  \dot{w_0} ^{-1}x^{-1} \dot{w} B_i x \dot{w_0} \cap  B_{i+1} \ne \emptyset \}.$$
Then it is easy to see that $|{\Gamma}_w | \leq |\widetilde{\Gamma}_w|$. Since  $x\in \Omega$,  we can  write
$$\dot{w_0} ^{-1}x^{-1} \dot{w}= \rho_w(x) \dot{w_0} \sigma_w(x),  \quad  \rho_w(x) \in B_{i+1}, \sigma_w(x) \in U_{i+1}.$$
It is easy to see that $x\in \widetilde{\Gamma}_w$ if and only if $ \sigma_w(x)  B_ix \cap T_{i+1} \ne \emptyset$.
This is equivalent to $ x T_{i+1}  \sigma_w(x)  \cap B_i \ne \emptyset$. When $t\in T_{i+1}\backslash T_i$, we have
$$  x t  \sigma_w(x) =t(t^{-1}xt ) \sigma_w(x) \notin B_i.$$
Therefore $x\in \widetilde{\Gamma}_w$ if and only if $ x T_{i}  \sigma_w(x)  \cap B_i \ne \emptyset$.
Thus we have
$$\widetilde{\Gamma}_w=\{x\in  \Omega \mid t^{-1}xt  \sigma_w(x)  \in U_i \ \text{for some} \ t\in T_i  \}.$$
Now let $\gamma_1,\gamma_2, \dots, \gamma_n $ be all the positive roots in $\Phi^+$  and we write $x= \varepsilon_{\gamma_1}(z_1)\varepsilon_{\gamma_2}(z_2) \cdots \varepsilon_{\gamma_n}(z_n)$.
Then we write
$$ \sigma_w(x) = \varepsilon_{\gamma_1}(f_1(z_1,\dots z_n))\varepsilon_{\gamma_2}(f_2(z_1,\dots z_n))\cdots \varepsilon_{\gamma_n}(f_n(z_1,\dots z_n)).$$
Using the structure theory  of algebraic groups, if we regard $z_1, z_2, \dots, z_n$ as the indeterminates, then $f_1, f_2, \dots, f_n$ are rational functions.
Noting that $w\ne e$, then we see that  not all the functions $f_1, f_2, \dots f_n$ are polynomial.
For any fixed $t_0\in T_i$, we write
$$t^{-1}_0xt_0 \sigma_w(x)= \varepsilon_{\gamma_1}(g_1(z_1,\dots z_n))\varepsilon_{\gamma_2}(g_2(z_1,\cdots z_n))\cdots \varepsilon_{\gamma_n}(g_n(z_1,\dots z_n)),$$
where $g_1, g_2, \dots, g_n$ are rational functions. We also see that not all $g_1, g_2, \dots, g_n$ are polynomials.
In particular, at least one function (denote by $g$)  is not the constant function.  For any fixed $u\in U_i$, we let
$$\mathcal{S}_{u}= \{(z_1,z_2,\dots, z_n) \in \mathbb{F}^n_{q^{(i+1)!}} \mid g(z_1,z_2, \dots, z_n)=u \}.$$
It is not difficult to see  that $|\mathcal{S}_{u}| \leq k q^{(n-1)\cdot (i+1)!}$  for some integer $k$ (determined by $g$).
Thus we have
$$|\widetilde{\Gamma}_w | \leq | \bigcup_{u\in U_i}\mathcal{S}_{u}| \leq \sum_{u\in U_i}  |\mathcal{S}_{u}| \leq k q^{(n-1)\cdot (i+1)!+n \cdot i!} $$

Note that  $\Gamma=\bigcup_{w\in W} \Gamma_w$.
Then there exists an integer $N$ such that
$$| \Gamma| \leq \sum_{w\in W} |\Gamma_w |\leq  N |W|  q^{(n-1)\cdot (i+1)!+n \cdot i!}. $$
When $i$ is large enough, the degree of the right side as a polynomial of $q$ is  $(n-1)\cdot (i+1)!+n \cdot i!$.
By Lemma \ref{Omegageq}, we have
$$|\Omega| \geq q^{(i+1)!(n-1)}( q^{ i! (i-r-n+1)}-P),$$
and the degree of the right side as a polynomial of $q$ is $i!(ni-r)$. We have
$$ i!(ni-r)-  ((n-1)\cdot (i+1)!+n \cdot i!)= (i+1)!- i! (2n+r)>0$$
when $i$ is large enough. Therefore $ \Omega\backslash\Gamma$ is nonempty.
\end{proof}

\begin{proof}[Proof of Proposition \ref{keylemma}]
It suffices to show the following  Claim $(\clubsuit)$:  There exists $u\in \Omega$ such that  for any $g \in G_i\backslash Z(G_i)$ and $w\in W$,
 $g u \dot{w} \notin u\dot{w_0} B_{i+1}$. 
 Once the Claim $(\clubsuit)$ is proven, then for such $u \in \Omega$, we have $g u \dot{w} {\bf 1}_{\mu} \notin \Bbbk u w_0 {\bf 1}_{\mu}$ and hence
$gu\dot{w_0} C(\mu)_K \notin \Bbbk u\dot{w_0} C(\mu)_K $ for any $g \in G_i\backslash Z(G_i)$.
Thus it is not difficult to see that  the coefficients of  $u \dot{w_0} C(\mu)_K$ in the expression of
  $$\xi_{i, u}= \sum_{z\in U_{i+1} } b_z z\dot{w_0} C(\mu)_K,  \quad b_z\in \Bbbk.  $$
 is nonzero, which implies that  $\xi_{i, u} \in E_{i+1}(\mu)_K$ is nonzero.

 Now we prove the Claim $(\clubsuit)$. Firstly we show that if $w\ne w_0$ and $u\in \Omega$, then $g u \dot{w} \notin u \dot{w_0} B_{i+1}$. Indeed, we write $g=x\dot{v}y$, where $x\in U_{i}$, $v\in W$ and $y\in B_i$.  We determine whether the element $ \dot{ w_0} u^{-1} g u \dot{w}= \dot{w_0} u^{-1} x\dot{v}y u \dot{w }$ is in $B_{i+1}$.
By the property of $\Omega$, it is easy to see that $ \dot{w_0} u^{-1} x \dot{v} \in B_{i+1} w_0 B_{i+1}$ for any $u\in \Omega$. Thus when $w\ne w_0$, we get $\dot{ w_0} u^{-1} g u \dot{w} \notin B_{i+1}$, which proves the  Claim $(\clubsuit)$ for $w\ne w_0$. For the case $w=w_0$,  the Claim $(\clubsuit)$ follows immediately from Lemma \ref{Omega-Gamma}. Therefore Proposition \ref{keylemma} is proved. 
\end{proof}

Now for each integer $i$, we fix $u_i\in U_{i+1}$ such that $u_i \in \Omega\backslash\Gamma$  using the same notation in Proposition \ref{keylemma}. Thus
$\xi_i:= \xi_{i,u_i}  \in E_{i+1}(\mu)_K $ is nonzero. For convenience, we set
$$ \eta_{i}= \sum_{p\in P_{J', i}} \lambda(p)^{-1}p u_i  w_0 C(\mu)_K.$$
With this notation, we see that  $\xi_{i}=\displaystyle  (\sum_{w\in W_J} (-1)^{\ell(w)}w)  \eta_{i}$.
 For any $x\in  P_{J', i}$, it is easy to see that $ x\eta_{i} =\lambda(x)\eta_{i} .$ So we get a natrual  $\Bbbk G_i$-module homomorphism
 $$\varphi_i:  \op{Ind}^{G_i}_{P_{J', i}} {\bf 1}_{\lambda} \rightarrow \Bbbk G_i \eta_{i}, \quad  {\bf 1}_{\lambda} \mapsto   \eta_{i},$$
which induces a $\Bbbk G_i$-module  homomorphism
$$\varphi_i: E_i(\lambda)_{J} \rightarrow E_{i+1}{(\mu)_K},  \quad C(\lambda)_J \rightarrow \xi_i. $$
By the choice of $u_i$,  we see that $\varphi_i$ is nonzero and injective.
Therefore we can define a homomorphism  $f_i\in \text{Hom}_{G_i}(M_i, M_{i+1})$ by
$$f_i(C(\lambda)_J ) = C(\lambda)_J+ \xi_i   \quad \text{and} \quad  f_i(m)=m,  \forall  m\in E_i(\mu)_K.$$
For $i<j$, we denote by $f_{i,j}:~M_i\rightarrow M_j$ the composition $f_{j-1}\circ\cdots\circ f_{i+1}\circ f_i$ which is clearly in $\op{Hom}_{G_i}(M_i,M_j)$. Then $\{M_i,f_{i,j}\}$ is a direct system of vector spaces.   It is easy to see that each  $f_i$  is injective and thus all $f_{i,j}$ are injective. Let $M$ be the direct limit of $\{M_i,f_{i,j}\}$, which is a $\Bbbk {\bf G}$-module.

Suppose that $m,m'\in M$ are represented by $m_i\in M_i$ and $m'_j\in M_j$ respectively. Then $m=m'$ if and only if $f_{i,k}(m_i)=f_{j,k}(m'_j)$ for some $k$. By the injectivity mentioned above, this is equivalent to
 $$
 \begin{cases}
 f_{i,j}(m_i)=m_j' &\ {\rm if}~i<j;\\
 m_i=m_i'          &\ {\rm if}~i=j;\\
 f_{j,i}(m_j')=m_i &\ {\rm if}~i>j.
 \end{cases}
 $$
The space $M$ is a $\Bbbk {\bf G}$-module via the following way. For any $g\in {\bf G}$ and $m \in M$, suppose that  $m$ is represented by $m_i\in M_i$. We choose $k$ large enough such that $k\ge i$ and $g\in G_k$. Define $gm\in M$ to be the element represented by $gf_{ik}(m_i)\in M_k$.

\begin{lemma}
Let $M$ be the  $\Bbbk {\bf G}$-module constructed before. Then we have a short exact sequence
$$0 \rightarrow E(\mu)_K  \rightarrow M \rightarrow E(\lambda)_J \rightarrow 0$$
of $\Bbbk {\bf G}$-modules.
\end{lemma}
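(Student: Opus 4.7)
The plan is to exhibit the sequence as the direct limit of the split short exact sequences $0 \to E_i(\mu)_K \to M_i \to E_i(\lambda)_J \to 0$ at each finite level. Since direct limits of $\bk$-modules are exact, this will automatically produce a short exact sequence at the limit, so the task reduces to identifying the outer terms with $E(\mu)_K$ and $E(\lambda)_J$ and checking $\Bbbk\bg$-equivariance of the resulting maps.

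First I would note that $E(\lambda)_J$ and $E(\mu)_K$, being simple modules generated by $C(\lambda)_J$ and $C(\mu)_K$, can be written as $E(\lambda)_J = \varinjlim E_i(\lambda)_J$ and $E(\mu)_K = \varinjlim E_i(\mu)_K$ with the inclusions as transition maps. For each $i$, let $j_i\colon E_i(\mu)_K \hookrightarrow M_i$ be $m \mapsto (0,m)$ and $p_i\colon M_i \twoheadrightarrow E_i(\lambda)_J$ be $(a,b)\mapsto a$. Extending the definition of $f_i$ by $G_i$-equivariance gives $f_i(a,b) = (a,\,b+\varphi_i(a))$, from which one reads off $f_i\circ j_i = j_{i+1}$ and $p_{i+1}\circ f_i$ equals $p_i$ followed by the inclusion $E_i(\lambda)_J \hookrightarrow E_{i+1}(\lambda)_J$. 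Thus $\{j_i\}$ and $\{p_i\}$ are morphisms of direct systems and, by the universal property, induce $\Bbbk$-linear maps $\iota\colon E(\mu)_K \to M$ and $\pi\colon M \to E(\lambda)_J$. These maps are $\Bbbk\bg$-equivariant because each $j_i$ and $p_i$ is $G_i$-equivariant and the $\bg$-action on any of the limits in sight is computed by lifting to a sufficiently large $G_k$.

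Next I would verify exactness. Injectivity of $\iota$ holds because $f_{i,k}(0,m_i) = (0,m_i)$ for $k \geq i$, so a class vanishing in $M$ forces $m_i = 0$. Surjectivity of $\pi$ is clear since $\pi([(a_i,0)]) = [a_i]$ covers $E(\lambda)_J$, and $\pi\circ\iota = 0$ is immediate. The only substantive check is $\ker\pi \subseteq \op{im}\iota$: given $m = [(a_k,b_k)]$ with $\pi(m) = 0$, the class $[a_k]$ vanishes in $E(\lambda)_J$, hence $a_k = 0$ already in $E_j(\lambda)_J$ for some $j \geq k$ (the transition maps being injective inclusions), so $f_{k,j}(a_k,b_k) = (0,c_j)$ for some $c_j \in E_j(\mu)_K$ and $m = \iota([c_j])$.

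No serious obstacle is anticipated: the argument is routine direct-limit formalism, and the substantive input — injectivity of each $\varphi_i$, and hence of each $f_i$ — has already been supplied by the construction via Proposition \ref{keylemma}. The non-splitness of the resulting sequence, which is what actually witnesses the nonvanishing $\op{Ext}^1$, is a separate matter that will presumably be addressed after this lemma.
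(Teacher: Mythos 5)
Your proposal is correct and is essentially the paper's argument in different clothing: the paper identifies the submodule $N\subseteq M$ of classes represented in some $E_i(\mu)_K$ with $E(\mu)_K$ (using that $f_i$ restricts to the inclusion there) and then shows $M/N\cong E(\lambda)_J$ via the first-component decomposition $m_j=x_j+y_j$, which is exactly your observation that $p_{i+1}\circ f_i$ factors as $p_i$ followed by the inclusion. Your packaging via morphisms of direct systems and exactness of $\varinjlim$ is just a cleaner way of saying the same thing.
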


\begin{proof}
Let $N$ be the subspace of $M$ such that each element $m\in N$ is represented by $m_i\in  E_i(\mu)_K$ for some integer $i$.
Since the restriction of $f_i$ on $E_i(\mu)_K$ is simply the usual inclusion $E_i(\mu)_K \rightarrow E_{i+1}(\mu)_K$, we have $N \cong E(\mu)_K$ as $\Bbbk {\bf G}$-modules. In the following, we show that $M/N \cong E(\lambda)_J$ as $\Bbbk {\bf G}$-modules.

Let $0\ne \overline{m} \in M/N$ and we choose one representative $m\in M$. Then there is an integer $j$ such that $m$ is represented by $m_j \in M_j$. Noting that $M_j= E_j(\lambda)_J \oplus  E_j(\mu)_K$, we write $m_j= x_j +y_j$, where $x_j\in E_j(\lambda)_J, y_j \in E_j(\mu)_K. $
Let $x\in M$ be the equivalence class containing $x_j$.  Then we get  $\overline{m}=\overline{x}$.  Thus $M/N \cong E(\lambda)_J$ as $\Bbbk$-vector spaces.

On the other hand, we consider $g \overline{m}$ for any $g\in G$ and $m\in M$.  There exists an integer $k$ such that $g\in G_k$ and $m$ is represented by $m_k \in M_k$. Write $m_k= x_k +y_k$,  where $x_k\in E_k(\lambda)_J, y_k \in E_k(\mu)_K$. Thus $g m_k = gx_k +g y_k$ and
 $gx_k\in E_k(\lambda)_J$, $gy_k \in E_k(\mu)_K$. Let $x\in M$ be the equivalence class containing $x_k$. Thus we have
$$ g \overline{m}=\overline{gm} =\overline{gx} =  g \overline{x},$$
which implies that $M/N \cong E(\lambda)_J$ as $\Bbbk {\bf G}$-modules. The lemma is proved.

\end{proof}

\begin{proposition}
The short exact sequence
$$0 \rightarrow E(\mu)_K  \rightarrow M \rightarrow E(\lambda)_J \rightarrow 0$$
constructed  before is non-split.
\end{proposition}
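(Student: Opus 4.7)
The plan is to argue by contradiction. Suppose there is a $\Bbbk\bg$-equivariant section $\sigma:E(\lambda)_J\to M$ and set $v:=\sigma(C(\lambda)_J)\in M$. Represent $v$ at a sufficiently large level $k_0$ as $v_{k_0}=C(\lambda)_J+y_{k_0}$ with $y_{k_0}\in E_{k_0}(\mu)_K$; by construction of the gluings $f_i$, at level $k\ge k_0$ the representative is $v_k=C(\lambda)_J+y_k$ with $y_k=y_{k_0}+\sum_{i=k_0}^{k-1}\xi_i\in E_k(\mu)_K$. The $\Bbbk\bg$-equivariance of $\sigma$ forces $gv=0$ for every $g\in\Bbbk\bg$ with $gC(\lambda)_J=0$; unpacking this level by level yields, for each sufficiently large $m$, a $\Bbbk G_m$-module homomorphism $\psi_m:E_m(\lambda)_J\to E_m(\mu)_K$ with $\psi_m(C(\lambda)_J)=y_m$, i.e.\ $y_m$ lies in the image of $\op{Hom}_{\Bbbk G_m}(E_m(\lambda)_J,E_m(\mu)_K)$ evaluated at $C(\lambda)_J$.

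The heart of the proof is to assemble this local data into a $\Bbbk\bg$-module homomorphism $\tau:E(\lambda)_J\to E(\mu)_K$ and then apply Schur's lemma. The family $\{\psi_m\}$ is only almost compatible across levels: the difference $\psi_{m+1}|_{E_m(\lambda)_J}-\iota\circ\psi_m$ sends $C(\lambda)_J$ to $\xi_m$, which is itself $\phi_m(C(\lambda)_J)$ for the $\Bbbk G_m$-equivariant map $\phi_m:E_m(\lambda)_J\to E_{m+1}(\mu)_K$ coming from Frobenius reciprocity applied to the $P_{J',m}$-$\lambda$-eigenvector $\eta_m=\sum_{p\in P_{J',m}}\lambda(p)^{-1}p\,u_m\,\dot{w_0}\,C(\mu)_K$ (the eigenvector property is the identity $q\eta_m=\lambda(q)\eta_m$ noted in the setup). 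This controlled coboundary can be absorbed by a shift-of-basepoint argument, producing the desired $\tau$.

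By the classification of simple modules in Proposition~2.8 of \cite{CD1} combined with Schur's lemma, $\op{Hom}_{\Bbbk\bg}(E(\lambda)_J,E(\mu)_K)$ is zero unless $\lambda=\mu$ and $J=K$, in which case it is one-dimensional. If $E(\lambda)_J\not\cong E(\mu)_K$, the forced $\tau=0$ gives $y_{k_0}=0$; then $v$ is represented at level $k_0+1$ by $C(\lambda)_J+\xi_{k_0}$, and replaying the argument with this new basepoint yields $\xi_{k_0}=0$, contradicting Proposition~\ref{keylemma}. If $E(\lambda)_J\cong E(\mu)_K$ (so $\lambda=\mu$, $J=K$), then every $y_k$ must be a $\Bbbk$-multiple of $C(\mu)_K$, so $\xi_k=y_{k+1}-y_k\in\Bbbk\cdot C(\mu)_K$; but the proof of Proposition~\ref{keylemma} (via Claim~$(\clubsuit)$) shows that $\xi_k$ has nonzero coefficient on $u_k\dot{w_0}C(\mu)_K$, which is linearly independent from $C(\mu)_K$ because $u_k\in\Omega\setminus\Gamma\subset U_{k+1}\setminus U_k$ lies in a $P_{K'}$-double coset disjoint from that of the identity. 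Either way, we reach a contradiction.

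The main technical obstacle is to make the assembly step rigorous: the $\{\psi_m\}$ do not literally glue into a direct system, but the failure is precisely captured by the $\Bbbk G_m$-equivariant maps $\phi_m$, and a careful bookkeeping (either by shifting the base level $k_0$ until $y_{k_0}$ forces a $\Bbbk\bg$-linear incarnation, or equivalently by reading off the $\op{Ext}^1$-cocycle of the extension) converts the level-wise data into a $\Bbbk\bg$-linear obstruction on which Schur's lemma can be applied.
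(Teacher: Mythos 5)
Your opening reduction is essentially sound: a section $\sigma$ with $v=\sigma(C(\lambda)_J)$ does yield, for each sufficiently large $m$, a $\Bbbk G_m$-homomorphism $\psi_m\colon E_m(\lambda)_J\to E_m(\mu)_K$ with $\psi_m(C(\lambda)_J)=y_m$ and $y_{m+1}=y_m+\xi_m$. But everything then hinges on the assembly step that you yourself flag as the ``main technical obstacle,'' and that step cannot be carried out as described. The obstruction to the $\psi_m$ forming a morphism of direct systems is \emph{exactly} the nonvanishing of the $\xi_m$ guaranteed by Proposition \ref{keylemma}: $\psi_{m+1}|_{E_m(\lambda)_J}-\psi_m$ sends $C(\lambda)_J$ to $\xi_m$, and since each $\xi_m$ involves $u_m\in U_{m+1}\setminus U_m$ and lies in $E_{m+1}(\mu)_K\setminus E_m(\mu)_K$, the partial sums $\sum_i\xi_i$ stabilize in no $E_k(\mu)_K$; a shift of basepoint $v\mapsto v-n$ with $n\in E(\mu)_K$ changes every $y_k$ by the same element and leaves the differences $\xi_k$ untouched, so it absorbs nothing. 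The strategy is therefore circular: you need $\xi_m\neq 0$ for the final contradiction, yet you must treat it as negligible to manufacture $\tau$. Compounding this, Schur's lemma and the classification from \cite{CD1} are available only for $\bg$ itself: at finite level $E_m(\lambda)_J$ and $E_m(\mu)_K$ are in general reducible $\Bbbk G_m$-modules with large Hom spaces (the paper's own $\varphi_m$ is a nonzero injective $\Bbbk G_m$-map $E_m(\lambda)_J\to E_{m+1}(\mu)_K$ even when $\lambda\neq\mu$), so neither ``$\tau=0$ forces $y_{k_0}=0$'' nor ``every $y_k$ is a scalar multiple of $C(\mu)_K$'' follows from what you have established.

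For comparison, the paper sidesteps Hom spaces entirely. Since $C(\lambda)_J$ is fixed by ${\bf U}'_{w_J}$ and generates $E(\lambda)_J$, a splitting would produce a ${\bf U}'_{w_J}$-fixed vector $m\in M\setminus E(\mu)_K$. Taking a representative $m_i\in M_i$ and applying the transition map $f_i$, the new summand $\sum_{w,x} a_{w,x}\,xw\,\xi_i$ is nonzero by Proposition \ref{keylemma} and lies in $E_{i+1}(\mu)_K\setminus E_i(\mu)_K$, and the counting bound $|G_i|<|U'_{w_J,i+1}|$ rules out $f_i(m_i)$ being $U'_{w_J,i+1}$-fixed. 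If you want to rescue your approach, you should exploit the eigenvector/fixed-vector property of $C(\lambda)_J$ at a single finite level in this way, rather than attempting to pass the maps $\psi_m$ to the limit.
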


\begin{proof}
  It is enough to show that there is no  element $m\in M- E(\mu)_K$ such that $m\in M^{{\bf U}'_{w_J} }$.
Suppose that such $m\in M- E(\mu)_K$ exists and assume that $m$ is represented by $m_i\in M_i$.
Since $M_i= E_i(\lambda)_J \oplus  E_i(\mu)_K$, we write
$$m_i=\sum_{w\in W,  x\in U_i} a_{w,x} x w C(\lambda)_J + \sum_{v\in W, y\in U_i} b_{v,y} y v C(\mu)_K, $$
where not all $a_{w,x}$ are zero since $m\in M\backslash E(\mu)_K$. Thus we have
$$f_i (m_i)=\sum_{w\in W,  x\in U_i} a_{w,x} x w (C(\lambda)_J+ \xi_{i}) + \sum_{v\in W, y\in U_i} b_{v,y} y v C(\mu)_K ,$$
which is in $M^{U'_{w_J, {i+1}}}_{i+1}$ since $m\in M^{{\bf U}'_{w_J} }$. By Proposition \ref{keylemma}, we see that
$$0\ne \displaystyle \sum_{w\in W,  x\in U_i} a_{w,x} x w \xi_i \in E_{i+1}(\mu)_K\backslash E_i(\mu)_K$$
if $i$ is large enough. Note that $|G_{i}| < |U'_{w_J, {i+1}}|$ when $i$ is large enough.  Thus we see that $f_i (m_i) \notin M^{U'_{w_J, {i+1}}}_{i+1}$,  which is a contradiction.  The proposition is proved.
\end{proof}

In the paper  \cite{D1}, we introduce the principal representation category $\mathscr{O}({\bf G})$. The category $\mathscr{O}({\bf G})$  is  defined to be the full subcategory of $\Bbbk{\bf G}$-Mod such that any object $M$ in $\mathscr{O}({\bf G})$ is of finite length and its composition factors are $E(\theta)_J$ for some $\theta \in \widehat{\bf T}$ and $J\subset I(\theta)$. Let  $\widehat{\bf C}$ be the central characters of ${\bf G}$, which consists of  the homomorphisms $\chi: {\bf C} \rightarrow \Bbbk^*$. For any central character $\chi\in \widehat{\bf C}$, we let $\mathscr{O}({\bf G})_{\chi}$ the subcategory of $\mathscr{O}({\bf G})$ such that the composition factors of each object in $\mathscr{O}({\bf G})_{\chi}$ are $E(\lambda)_J$, where  $\lambda|_ {\bf C} = \chi $. Thus $\mathscr{O}({\bf G})_{\chi}$ is a block and we get the block decomposition of  $\mathscr{O}({\bf G})$ as follows:
$$\mathscr{O}({\bf G})= \bigoplus_{\chi \in \widehat{\bf C}} \mathscr{O}({\bf G})_{\chi} $$
by Theorem \ref{ifpart} and Theorem \ref{onlyifpart}.

\bigskip

\noindent{\bf Statements and Declarations}  The authors  declare that they have no conflict of interests with others.

\medskip

\noindent{\bf Data Availability}  Data sharing not applicable to this article as no datasets were generated or analysed during the current study.

\medskip

\noindent{\bf Acknowledgements} The authors are grateful to  Nanhua Xi for his inspiring encouragements and helpful comments. This work is sponsored by  NSFC-12101405.

\bigskip

\bibliographystyle{amsplain}

\end{document}